\documentclass[12pt]{amsart}
\usepackage{amsfonts,latexsym,amscd}

\usepackage[all,cmtip,2cell]{xy}
\UseAllTwocells
\usepackage{verbatim}
\usepackage{hyperref}
\usepackage{mathrsfs}
\usepackage{amssymb}
\usepackage{stmaryrd}

\numberwithin{equation}{section}

\newtheorem{theorem}{Theorem}[section]
\newtheorem{lemma}[theorem]{Lemma}
\newtheorem{proposition}[theorem]{Proposition}
\newtheorem{corollary}[theorem]{Corollary}
\theoremstyle{definition}

\newtheorem{example}[theorem]{Example}

\newtheorem{remark}[theorem]{Remark}

\newcommand{\id}{{\rm id}}

\newcommand{\Mod}{{\rm Mod}}
\newcommand{\Fun}{{\rm Fun}}
\newcommand{\Irr}{\text{\rm Irr}}

\newcommand{\FPdim}{{\rm FPdim}}
\newcommand{\Ext}{{\rm Ext}}

\newcommand{\Hom}{{\rm Hom}}

\newcommand{\Rep}{{\rm Rep}}

\newcommand{\sRep}{{\rm sRep}}
\newcommand{\sVec}{{\rm sVec}}

\newcommand{\Vect}{{\rm Vec}}

\newcommand{\B}{\mathcal{B}}

\newcommand{\la}{\langle\,}
\newcommand{\ra}{\,\rangle}

\newcommand{\ot}{\otimes}

\newcommand{\ben}{\begin{enumerate}}
\newcommand{\een}{\end{enumerate}}

\newcommand{\CC}{{\mathbb{C}}}

\newcommand{\C}{{\mathcal C}}

\newcommand{\E}{\mathcal{E}}

\newcommand{\Z}{{\mathcal Z}}

\newcommand{\A}{{\mathcal A}}
\newcommand{\N}{{\mathcal N}}

\newcommand{\D}{{\mathcal D}}

\newcommand{\be}{{\bf 1}}

\newcommand{\Bimod}{{\rm Bimod}}

\hyphenation{se-mi-simple co-se-mi-simple}


\begin{document}

\title[Braided categories with a Lagrangian subcategory]{On finite non-degenerate braided tensor categories with a Lagrangian subcategory} 

\author{Shlomo Gelaki}\thanks{Current affiliation of Shlomo Gelaki: Department of Mathematics, Iowa State University, Ames, IA 50011, USA. Email: gelaki@iastate.edu}

\address{Department of Mathematics, Technion-Israel Institute of
Technology, Haifa 32000, Israel} \email{gelaki@math.technion.ac.il}

\author{Daniel Sebbag}
\address{Department of Mathematics, Technion-Israel Institute of
Technology, Haifa 32000, Israel}
\email{sebbag.daniel@gmail.com}
\date{\today}

\keywords{non-degenerate braided tensor category; symmetric tensor category; Lagrangian subcategory; finite supergroup}

\begin{abstract}
Let $W$ be a finite dimensional vector space over $\mathbb{C}$ viewed as a purely odd supervector space, and let $\sRep(W)$ be the finite symmetric tensor category of finite dimensional superrepresentations of the finite supergroup $W$. We show that the set of equivalence classes of finite non-degenerate braided tensor categories $\C$ containing $\sRep(W)$ as a Lagrangian subcategory is a torsor over the cyclic group $\mathbb{Z}/16\mathbb{Z}$. In particular, we obtain that there are $8$ non-equivalent such braided tensor categories $\C$ which are integral and $8$ which are non-integral.
\end{abstract}

\maketitle

\tableofcontents

\section{Introduction}
There are two types of finite braided tensor categories,  which in a sense are opposite to each other. On one extreme  we have the symmetric tensor categories, i.e., braided tensor categories which coincide with their M\"{u}ger centralizer, while on the other extreme we have the \emph{non-degenerate} braided tensor categories, i.e., braided tensor categories with trivial M\"{u}ger centralizer.

On one hand, finite symmetric tensor categories over $\CC$ are completely classified in supergroup-theoretical terms. Namely, by a theorem of Deligne \cite{De}, every such category is braided tensor equivalent to the category $\sRep(G\ltimes{W},u)$ of finite dimensional superrepresentations of a unique (up to isomorphism) finite supergroup $G\ltimes W$ with a central element $u\in G$ of order $\le 2$ acting via the parity automorphism (see  \cite{EG1,EGNO}). 

On the other hand, the classification of non-degenerate braided {\em fusion} categories $\C$ which contain a Tannakian subcategory $\Rep(G)$ as a {\em Lagrangian} subcategory (i.e., $\Rep(G)$ coincides with its M\"{u}ger centralizer inside $\C$) is known too. Namely, such categories $\C$ are precisely the centers $\mathcal{Z}(\Vect^{\omega}(G))$ of pointed fusion categories $\Vect^{\omega}(G)$ \cite{DGNO, DGNO2}.

Furthermore, the classification of non-degenerate braided {\em fusion} categories $\B$ which contain the symmetric tensor category $\sVec$ of finite dimensional supervector spaces as a {\em Lagrangian} subcategory is also known \cite{DGNO, DGNO2}. Namely, it is known that there are exactly $16$ such categories $\B$, up to {\em braided tensor} equivalence. Moreover, the categories $\B$ form a group ${\bf B}$ isomorphic to $\mathbb{Z}/16\mathbb{Z}$ with respect to a certain modified Deligne tensor product $\widetilde{\boxtimes}$ \cite{DMNO,DNO} (see \ref{MissVec} for more details).

Our purpose in this paper is to take the first step in an attempt to extend the classification of \cite{DGNO, DGNO2} to finite (non-semisimple) non-degenerate braided tensor categories containing a Lagrangian subcategory. Namely, we classify finite non-degenerate braided tensor categories $\C$ containing $\sRep(W):=\sRep(\langle u\rangle \ltimes{W},u)$ as a Lagrangian subcategory for a finite dimensional vector space $W$ viewed as a {\em purely odd} supervector space. Observe that the center $\Z(\sRep(W))$ of $\sRep(W)$ is an example of such category $\C$ (see Theorem \ref{thecenterisnondeg}).

More precisely, first we prove in Theorem \ref{pre main theorem} that every finite non-degenerate braided tensor category containing a Lagrangian subcategory $\sRep(W)$ admits a natural $\mathbb{Z}/2\mathbb{Z}$-faithful grading, and has $2$ invertible objects and exactly $1$ or $2$ more simple objects (noninvertible, if $W\ne 0$). 

We then show in Theorem \ref{freeaction} that the group ${\bf B}\cong \mathbb{Z}/16\mathbb{Z}$ acts freely on the set of equivalence classes of finite non-degenerate braided tensor categories $\C$ containing $\sRep(W)$ as a Lagrangian subcategory. 

Finally, we use Theorems \ref{pre main theorem}, \ref{freeaction} to prove the following theorem, which is the main result of this paper.

\begin{theorem}
\label{main theorem}
The following hold:
\begin{enumerate}
\item The action of the group ${\bf B}\cong \mathbb{Z}/16\mathbb{Z}$ on the set of equivalence classes of finite non-degenerate braided tensor categories containing $\sRep(W)$ as a Lagrangian subcategory is free and transitive.

\item There are $8$ equivalence classes of finite non-degenerate braided {\em integral} tensor categories containing $\sRep(W)$ as a Lagrangian subcategory, and $8$ equivalence classes of finite non-degenerate braided {\em non-integral} tensor categories containing $\sRep(W)$ as a Lagrangian subcategory.
\end{enumerate}
\end{theorem}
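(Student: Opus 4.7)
By Theorem~\ref{freeaction}, $\mathbf{B}\cong\mathbb{Z}/16\mathbb{Z}$ acts freely on the set $S$ of equivalence classes of finite non-degenerate braided tensor categories containing $\E:=\sRep(W)$ as a Lagrangian subcategory, so establishing (1) reduces to proving transitivity, equivalently the bound $|S|\le 16$.

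Fix the basepoint $\C_0:=\Z(\sRep(W))\in S$. The plan for transitivity is to associate to each $\C\in S$ a natural element $\Psi(\C)\in\mathbf{B}$ satisfying $\C\simeq\Psi(\C)\,\widetilde{\boxtimes}\,\C_0$. The natural candidate is the modified relative Deligne product
\[
\Psi(\C):=\C\,\widetilde{\boxtimes}_{\E}\,\C_0^{\mathrm{rev}},
\]
in which the two copies of $\E$ embedded Lagrangianly on each side are identified. The two verifications required are: first, that $\Psi(\C)$ is a finite non-degenerate braided tensor category whose residual Lagrangian subcategory collapses from $\E$ to $\sVec$ (placing $\Psi(\C)$ in $\mathbf{B}$); and second, a Morita-type cancellation $\Psi(\C)\,\widetilde{\boxtimes}\,\C_0\simeq\C$, obtained from the standard identity $\C_0^{\mathrm{rev}}\,\widetilde{\boxtimes}\,\C_0\simeq\Z(\C_0)$ on factoring out $\E$. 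Combined with freeness this gives $|S|=|\mathbf{B}|=16$. I expect the main obstacle to be the rigorous extension of these Deligne-product identities from the fusion setting (DMNO/DNO) to the present finite non-semisimple situation, which I would handle by passing to associated graded fusion categories, where the existing machinery applies, and then lifting the conclusions back. A direct fallback is to bound $|S|$ using Theorem~\ref{pre main theorem}: the severe structural constraints (at most four simples, two invertible, faithful $\mathbb{Z}/2\mathbb{Z}$-grading, total $\FPdim(\C)=\FPdim(\E)^2=2^{2\dim W+2}$) pin down the possible fusion data and bound the number of associator/braiding completions by $16$.

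For part (2), the basepoint $\C_0=\Z(\sRep(W))$ is integral, since the center of a super-pointed symmetric tensor category has all simples of integer Frobenius--Perron dimension, describable explicitly via pairs (stabilizer, projective character). The group $\mathbf{B}$ has a unique index-$2$ subgroup $\mathbf{B}_0\cong\mathbb{Z}/8\mathbb{Z}$, consisting of the $8$ pointed (= integral) minimal modular extensions of $\sVec$; the nontrivial coset consists of the $8$ Ising-type (= non-integral) extensions. Since $\widetilde{\boxtimes}$-multiplication with a pointed element of $\mathbf{B}$ preserves integrality, while multiplication with an Ising-type element introduces a simple of Frobenius--Perron dimension $\sqrt{2}$ and so destroys it, the $\mathbf{B}$-orbit of $\C_0$ splits evenly into $8$ integral classes (the orbit under $\mathbf{B}_0$) and $8$ non-integral classes (the other coset), which proves (2).
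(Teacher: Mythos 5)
Your reduction of (1) to transitivity via Theorem~\ref{freeaction} matches the paper, and your outline for (2) (splitting the ${\bf B}$-orbit of the integral basepoint $\Z(\E)$ along the index-$2$ pointed subgroup) is essentially the paper's argument. But the heart of the matter, transitivity, is not actually proved in your proposal: everything is delegated to the two ``verifications required'' for $\Psi(\C)=\C\,\widetilde{\boxtimes}_{\E}\,\Z(\E)^{\mathrm{rev}}$, namely that this relative product is a well-defined finite non-degenerate braided tensor category whose Lagrangian subcategory collapses to $\sVec$, and that the cancellation $\Psi(\C)\,\widetilde{\boxtimes}\,\Z(\E)\simeq\C$ holds. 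These are exactly the statements whose fusion-category proofs (DMNO/DNO) do not transfer automatically, because $\E=\sRep(W)$ is non-semisimple: one needs an exact commutative algebra, exactness of its module category, and non-semisimple substitutes for the factorization $\Z(\A)\cong\C\boxtimes\C'$ and for the graded-center theorem. Your proposed remedy --- ``passing to associated graded fusion categories and lifting back'' --- cannot work: any semisimplification or associated-graded construction collapses $\sRep(W)$ to $\sVec$, destroying precisely the data that distinguishes the categories being classified, and there is no mechanism for lifting braided equivalences of such quotients back to the original non-semisimple categories. The fallback (bounding the number of ``associator/braiding completions'' of the constraints from Theorem~\ref{pre main theorem} by $16$) is likewise only an assertion; no method is offered for counting braided tensor structures on a non-semisimple category with fixed Grothendieck-level data.

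For comparison, the paper realizes your cancellation idea without ever defining $\widetilde{\boxtimes}_{\E}$: using $\C_0\cong\sRep(W\oplus W^*)$ (Theorem~\ref{pre main theorem}) it sets $A:=I({\bf 1})$, where $I$ is right adjoint to the (non-braided) forgetful functor $\C_0\to\E$, proves $A$ is an exact commutative algebra and that $\A:=\Mod_{\C}(A)$ is a $\mathbb{Z}/2\mathbb{Z}$-graded finite tensor category with identity component $\E$; Theorem~\ref{injectivefun} (Shimizu's non-semisimple machinery) then gives $\Z(\A)\cong\C\boxtimes\C'$ with $\C'$ non-degenerate of Frobenius--Perron dimension $4$, hence $\C'\cong\B\in{\bf B}$, and the non-semisimple graded-center theorem (Theorem~\ref{gnn}) yields $\C\,\widetilde{\boxtimes}\,\B\cong\Z(\E)$, i.e.\ $\C\cong\Z(\E)\,\widetilde{\boxtimes}\,\B^{-1}$. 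Finally, a small inaccuracy in your (2): multiplying by an Ising category does not produce a simple of Frobenius--Perron dimension $\sqrt2$; by Theorem~\ref{simproj} the non-invertible simple of $\Z(\E)\,\widetilde{\boxtimes}\,\B$ has dimension $\FPdim(\E)/\sqrt2$, and establishing non-integrality (resp.\ integrality in the pointed case) requires identifying the simples via the free module functor as in Example~\ref{minze} and Theorem~\ref{simproj}, not just a dimension count.
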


In particular, Theorem \ref{main theorem} yields precise information on the number and projectivity of the simple objects in a finite non-degenerate braided tensor category containing a Lagrangian subcategory $\sRep(W)$ (see Corollary \ref{simprojex}). 

The structure of this paper is as follows. Section \ref{section:Preliminaries} is devoted to some preliminaries on finite (braided) tensor categories and their exact module categories, Hopf superalgebras, the finite tensor categories $\sRep(W)$, and the non-degenerate braided tensor categories $\Z(\sRep(W))$. Section \ref{section:proof of pre main theorem (1)} is devoted to the proof of Theorem \ref{pre main theorem}. Section \ref{MissVec} is devoted to the group ${\bf B}$, and to the proof that it acts on the set of equivalence classes of finite non-degenerate braided tensor categories containing the same $\sRep(W)$ as a Lagrangian subcategory, freely. Section \ref{section:proof of case (3)} is devoted to the proof of Theorem \ref{main theorem}. In Section \ref{section:proof of case (2)} we classify finite {\em degenerate} braided tensor categories containing a Lagrangian subcategory $\sRep(W)$. In Section \ref{appendix} we relate Theorem \ref{main theorem} with the works of Davydov and Runkel \cite{DR1,DR,DR2}.

\begin{remark}
In a future publication, we plan to extend the results of this paper to finite non-degenerate braided tensor categories containing a Lagrangian subcategory $\sRep(G\ltimes{W},u)$ of the most general form.
\end{remark}

\medskip
{\bf Acknowledgements.} We are grateful to Alexei Davydov, Pavel Etingof,     
Dmitri Nikshych, Victor Ostrik and Ingo Runkel for very useful discussions and helpful comments.

This work was done under the supervision of the first author, as part of the second author's Ph.D dissertation. 

Part of this work was done while the first author was visiting the Department of Mathematics at the University of Michigan in Ann Arbor; he is grateful for their warm hospitality.

This work was partially supported by the Israel Science Foundation (grant no. 561/12).

\section{Preliminaries}\label{section:Preliminaries}

Throughout this paper, the ground field will be the field 
$\CC$ of complex numbers, and all categories will be assumed to be $\CC$-linear abelian.
We refer the reader to the book \cite{EGNO} for a general background on finite tensor categories.

\subsection{Finite tensor categories}\label{finitetensorcat}

Let $\C$ be a finite tensor category, and let ${\rm Gr}(\C)$ be the Grothendieck ring of $\C$. 
Recall \cite[Subsection 2.4]{EO} that we have a character $\FPdim: {\rm Gr}(\C)\to \mathbb{R}$, attaching to $X\in \C$ the {\em Frobenius-Perron dimension} of $X$. 
Following \cite[Subsection 2.4]{EO}, we set $\FPdim(\C):=\sum_{X\in\Irr(\C)} \FPdim(X)\FPdim(P(X))$, where $\Irr(\C)$ is the (finite) set of isomorphism classes of simple objects of $\C$, and $P(X)$ denotes the projective cover of $X$.

We will need the following (straightforward) extension of \cite[Theorem 3.10]{GN} to the non-semisimple case.  

\begin{lemma}\label{posint}
Let $\C$ be a finite tensor category with simples $X_i$, projective covers $P_i$, and $\FPdim(X_i)=d_i$. Suppose $\FPdim(\C)$ is an integer. If $\Hom(P_i,P_j)\ne 0$ (i.e., $X_i$ occurs in $P_j$) then $d_id_j$ is an integer. In particular, $d_i^2$ is an integer, so $d_i$ is the square root of a positive integer.
\end{lemma}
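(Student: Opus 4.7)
The plan is to adapt the Galois-theoretic Perron--Frobenius argument underlying \cite[Theorem 3.10]{GN} to the present non-semisimple setting. The key is to replace the sum-of-squares formula $\FPdim(\C)=\sum_i d_i^2$ (valid in the semisimple case) by an identity involving the Cartan multiplicities. Setting $c_{ij}:=[P_j:X_i]=\dim\Hom(P_i,P_j)\in\mathbb{Z}_{\ge 0}$ and using $\FPdim(P_j)=\sum_i c_{ij}d_i$, one obtains
\[
\FPdim(\C)=\sum_j d_j\FPdim(P_j)=\sum_{i,j}c_{ij}d_id_j.
\]

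Next, I would recall that each $d_i$ is an algebraic integer, being the Perron--Frobenius eigenvalue (equivalently, the spectral radius) of the non-negative integer matrix $N_i$ of left multiplication by $[X_i]$ on ${\rm Gr}(\C)$. Consequently, for every $\sigma\in\mathrm{Gal}(\overline{\mathbb{Q}}/\mathbb{Q})$ the Galois conjugate $\sigma(d_i)$ is another eigenvalue of $N_i$ and so satisfies $|\sigma(d_i)|\le d_i$. Applying $\sigma$ to the identity above and using $\FPdim(\C)\in\mathbb{Z}$, the triangle inequality gives
\[
\FPdim(\C)=\Bigl|\sum_{i,j}c_{ij}\sigma(d_i)\sigma(d_j)\Bigr|\le\sum_{i,j}c_{ij}|\sigma(d_i)||\sigma(d_j)|\le\sum_{i,j}c_{ij}d_id_j=\FPdim(\C),
\]
forcing equality throughout.

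A term-by-term analysis of this chain yields two conclusions. First, the right-hand equality forces $|\sigma(d_i)||\sigma(d_j)|=d_id_j$ (hence $|\sigma(d_i)|=d_i$ and $|\sigma(d_j)|=d_j$) whenever $c_{ij}\ne 0$. Second, equality in the triangle inequality, combined with the sum being the positive real $\FPdim(\C)$, forces each summand $c_{ij}\sigma(d_i)\sigma(d_j)$ with $c_{ij}\ne 0$ to be a positive real. Together these yield $\sigma(d_id_j)=d_id_j$ for every $\sigma$ whenever $c_{ij}\ne 0$, so $d_id_j$ is rational, and being an algebraic integer, an integer. The ``in particular'' clause follows by specializing to $i=j$, since $X_i$ is the top of $P_i$ and hence $c_{ii}\ge 1$. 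The main subtlety I anticipate is extracting both consequences simultaneously from the inequality chain, but this is automatic once one observes that the total sum is a positive real, which pins down both the moduli and the phases of the individual terms.
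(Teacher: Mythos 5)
Your proof is correct and follows essentially the same route as the paper: the Cartan-multiplicity identity $\FPdim(\C)=\sum_{i,j}c_{ij}d_id_j$, the fact that each $d_i$ dominates its Galois conjugates in absolute value, and the forced equality in the resulting estimate to conclude $\sigma(d_id_j)=d_id_j$ whenever $c_{ij}\ne 0$. You merely spell out the triangle-inequality/phase analysis (and the observation $c_{ii}\ge 1$) that the paper leaves implicit, which is a welcome but not substantively different elaboration.
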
 

\begin{proof}
Let $\FPdim(\C)=d$. Let $N_{ij}$ be the multiplicity of $X_i$ in $P_j$. Then $\sum_{i,j} N_{ij}d_id_j=d$. Hence for any $g\in {\rm Gal}(\overline{\mathbb{Q}}/\mathbb{Q})$, $\sum_{i,j}N_{ij}g(d_id_j)=d$. Also, the numbers $d_i$ are algebraic integers largest in absolute value in their Galois orbits. Hence $|g(d_id_j)|\le d_id_j$ for all $i,j$. This means that $g(d_id_j)=d_id_j$, (i.e., $d_id_j$ is an integer) whenever $N_{ij}\ne 0$, as desired.
\end{proof} 

Recall that a finite tensor category $\C$ is called {\em weakly integral} if $\FPdim(\C)$ is an integer.

\begin{corollary}\label{gn} 
Let $\C$ be a weakly integral finite tensor category. Then there is an elementary abelian $2$-group $E$, a set of distinct square free  integers $n_x>0$, $x\in E$, with $n_0=1$, and a faithful grading \linebreak $\C =\oplus_{x\in E}\,\C_x$, such that $\FPdim(X)\in \mathbb{Z}\sqrt{n_x}$ for each $X\in \C_x$. 
In particular, any tensor subcategory of $\C$ is weakly integral.
\end{corollary}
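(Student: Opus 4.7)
The plan is to adapt the argument of \cite[Theorem 3.10]{GN} from the fusion to the finite setting, substituting Lemma \ref{posint} for the semisimple arithmetic input. First I would apply Lemma \ref{posint} to each simple $X_i\in\C$ and write $d_i:=\FPdim(X_i)=a_i\sqrt{n_i}$ with $a_i\in\mathbb{Z}_{>0}$ and $n_i$ a unique square-free positive integer. The key observation is that $\Ext^1_\C(X_i,X_j)\ne 0$ forces $n_i=n_j$: indeed in any finite tensor category one has $\Ext^1(X_i,X_j)\cong\Hom(\rad(P_i)/\rad^2(P_i),X_j)$, so a nonzero $\Ext^1$ places $X_j$ in the composition series of $P_i$; Lemma \ref{posint} then gives $d_id_j\in\mathbb{Z}$, and combined with square-freeness of $n_i$ and $n_j$ this yields that $n_in_j$ is a perfect square, hence $n_i=n_j$.

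Next I would let $E:=\{n_i\mid X_i\in\Irr(\C)\}$ and, for each $n\in E$, define $\C_n$ to be the full subcategory of $\C$ whose objects have all composition factors $X_i$ with $n_i=n$. The previous step shows that each block of the abelian category $\C$ is contained in a single $\C_n$, so the block decomposition of $\C$ gives the splitting $\C=\oplus_{n\in E}\C_n$; then every $X\in\C_n$ has $\FPdim(X)\in\mathbb{Z}_{\ge 0}\sqrt{n}$. To see this is a tensor grading by an elementary abelian $2$-group, take nonzero $X\in\C_n$ and $Y\in\C_m$: multiplicativity of $\FPdim$ gives $\FPdim(X\otimes Y)\in\mathbb{Z}_{>0}\sqrt{nm}$, while decomposing $X\otimes Y=\oplus_{p\in E}(X\otimes Y)_p$ writes the same number as $\sum_p c_p\sqrt{p}$ with $c_p\in\mathbb{Z}_{\ge 0}$. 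The classical $\mathbb{Q}$-linear independence of the $\sqrt{p}$ for distinct square-free $p$ then forces a unique nonzero $c_p$ with $\sqrt{p}$ a rational multiple of $\sqrt{nm}$, i.e., $p$ is the square-free part of $nm$. The resulting operation $n*m:=p$ is commutative, associative, has identity $n_0=1$ (coming from $\be\in\C_1$), and satisfies $n*n=1$, so $E$ is elementary abelian of exponent $2$; faithfulness of the grading is built into the definition of $E$.

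For the last assertion let $\D\subseteq\C$ be a tensor subcategory. Since the simples of $\D$ are simples of $\C$, their Frobenius--Perron dimensions retain the form $a_i\sqrt{n_i}$, and the inclusion $\Ext^1_\D\hookrightarrow\Ext^1_\C$ ensures that blocks of $\D$ refine blocks of $\C$. Hence each projective cover $P_i^\D$ of a simple $X_i$ in $\D$ has all its composition factors in $\C_{n_i}$, giving $\FPdim(P_i^\D)\in\mathbb{Z}_{\ge 0}\sqrt{n_i}$ and therefore $\FPdim(X_i)\FPdim(P_i^\D)\in\mathbb{Z}$; summing over $\Irr(\D)$ yields $\FPdim(\D)\in\mathbb{Z}$. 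The step I expect to require most care is the linear-independence argument that converts multiplicativity of $\FPdim$ into a tensor grading with values in the group of square-free parts; once this is in place, the elementary abelian $2$-group structure and the weak integrality of tensor subcategories are formal consequences.
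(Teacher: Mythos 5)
Your proof is correct and follows essentially the same strategy as the paper: Lemma \ref{posint} pins the simple dimensions to $\mathbb{Z}\sqrt{n}$, the components indexed by square-free parts give the decomposition, multiplicativity of $\FPdim$ together with $\mathbb{Q}$-linear independence of square roots of distinct square-free integers yields the elementary abelian $2$-grading, and weak integrality of a tensor subcategory $\D$ follows because all composition factors of $P_{\D}(X)$ share the square-free part of $X$. The only (harmless) difference is in how that last fact is justified: the paper uses the surjection $P(X)\twoheadrightarrow P_{\D}(X)$ together with the components being Serre subcategories, while you use the injection $\Ext^1_{\D}\hookrightarrow\Ext^1_{\C}$ and the block decomposition --- both arguments are valid and rest on the same consequence of Lemma \ref{posint}.
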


\begin{proof}
By Lemma \ref{posint}, every simple object of $\C$ has dimension $\sqrt{n}$ for some positive integer $n$.
Let $\C_0 \subset \C$ be the tensor subcategory generated by all simple objects of integer FP dimension. Then all objects of $\C_0$ have integer FP dimension, since an integer can equal a sum of square roots of integers only if all the summands are integers. A similar argument shows that 
for each square free positive integer $n$ the simple objects of $\C$
whose dimension is in $\mathbb{Z}\sqrt{n}$ generate a $\C_0$-subbimodule category $\C_n$ of $\C$. Moreover, it follows from Lemma \ref{posint} that for every $n$, $\C_n\subset \C$ is a {\em Serre subcategory} (see \cite[Definition 4.14.1]{EGNO}). 

Let
$$
E:=\{n  \mbox{ is square free } \mid  \exists X\in \C,\, \mbox{ such that } \FPdim(X)\in
\mathbb{Z}\sqrt{n}\}.
$$
It is clear that for $X\in \C_n$
and $Y\in \C_m$, $X\otimes Y$ is in $\C_{(nm)'}$ where $l'$ denotes the square free part of $l$.
This defines a commutative group operation on $E$ and a faithful grading on $\C$. Since the order of every element in $E$ is
at most $2$, $E$ is an elementary abelian $2$-group.

Finally, let $\D\subset \C$ be a tensor subcategory. Since $C_x$ are Serre subcategories, for any simple $X\in \C_x$ the projective cover $P_{\D}(X)$ of $X$ in $\D$ is also in $\C_x$ since $P(X)$ surjects onto $P_{\D}(X)$ by the proof of \cite[Proposition 6.3.3]{EGNO}, so $\FPdim(X)\FPdim(P_{\D}(X))$ is an integer. 
Hence the sum of all these numbers over all the simples $X\in \D$, which is $\FPdim(\D)$, is also an integer. 
\end{proof}

Recall \cite[Section 3]{EO} that a left $\C$-module category $\N$ is called {\em indecomposable} if it is not a direct sum of
two nonzero module categories, and is called {\em exact} if $P\otimes N$ is projective for every projective $P\in \C$ and every $N\in \N$. 
The same definition applies to right module categories.

\subsection{Centralizers and Lagrangian subcategories}
\label{Centralizers and Lagrangian subcategories}
 
Let $\mathcal{C}$ be a finite {\em braided} tensor category with braiding $c$, and let $s$ be the squared braiding, i.e., 
\begin{equation}\label{squared braiding}
{\rm s}_{X,Y}:={\rm c}_{Y,X}\circ {\rm c}_{X,Y},\,\,\,X,Y\in\C.
\end{equation}
Recall that two objects $X,Y\in{\mathcal{C}}$ {\em centralize} each other if ${\rm s}_{X,Y}= \id_{X\otimes{Y}}$, and that the (M\"{u}ger) {\em centralizer} $\mathcal{D}'$ of a full tensor subcategory $\mathcal{D}\subset\mathcal{C}$ is the full tensor subcategory of $\mathcal{C}$ consisting of all objects which centralize every object of $\mathcal{D}$ (see, e.g., \cite{DGNO}). Clearly, $\mathcal{D}$ is symmetric if and only if $\mathcal{D}'\subset{\mathcal{D}}$. If $\mathcal{D}' = \mathcal{D}$ then $\mathcal{D}$ is called {\em Lagrangian}. A Lagrangian subcategory of $\mathcal{C}$ is a maximal full symmetric tensor subcategory of $\mathcal{C}$. The category $\C$ is called {\em non-degenerate} if $\mathcal{C}'=\Vect$, {\em slightly degenerate} if $\mathcal{C}'=\sVec$, and {\em degenerate} otherwise.

By \cite[Theorem 4.9]{Sh}, we have  
\begin{equation}\label{shimizu}
\FPdim(\mathcal{D})\FPdim(\mathcal{D}')=\FPdim(\mathcal{C})\FPdim(\mathcal{C}'\cap \mathcal{D})
\end{equation}
and
\begin{equation}\label{shimizu''}
\mathcal{D}''=\mathcal{D}\vee\mathcal{C}'.
\end{equation}

We will say that two objects $X,Y\in{\mathcal{C}}$ {\em anti-centralize} each other if ${\rm s}_{X,Y}= -\id_{X\otimes{Y}}$, and that the {\em anti-centralizer} of a full tensor subcategory $\mathcal{D}\subset\mathcal{C}$ is the full subcategory of $\mathcal{C}$ consisting of all objects which anti-centralize every object of $\mathcal{D}$. Note that the anti-centralizer of $\D$ is not a tensor subcategory of $\C$ (unlike $\D'$).

\subsection{Exact commutative algebras}\label{Exact commutative algebras}
Let $\C$ be a finite {\em braided} tensor category. Recall \cite[Section 8.8]{EGNO} that an algebra object $A$ in $\C$ such that $\dim(\Hom(\be ,A))=1$ is called {\em exact} if the category $\Mod(A)_{\C}$ of right $A$-modules in $\C$ is an exact indecomposable $\C$-module category. 

Now let $A$ be an exact {\em commutative} algebra object in $\C$, and let $\A:=\Mod(A)_{\C}$. Then using the braiding on $\C$ and its inverse one can define on every $M\in \A$ two structures $M_+,\,M_-$ of a {\em left} $A$-module:
$$A\otimes M_+\xrightarrow{{\rm c}_{A,M}}M_+\otimes A\xrightarrow{}M_+\,\,\,\text{and}\,\,\,A\otimes M_-\xrightarrow{c^{-1}_{M,A}}M_-\otimes A\xrightarrow{}M_-$$
(see \cite{DGNO2}, \cite[Excercise 8.8.3]{EGNO}).
Both structures turn $M$ into an $A$-bimodule, so $\A$ is fully embedded in the finite tensor category $\Bimod_{\C}(A)$, and hence inherits from it a structure of a finite tensor category with tensor product $\otimes{\rm s}_A$ \cite[Section 8.8]{EGNO}.

Recall \cite[Proposition 8.8.10]{EGNO} that the {\em free module functor} 
\begin{equation}\label{freemodfun}
F: \C \twoheadrightarrow \A,\,\,X\mapsto X\otimes A,
\end{equation}
is a surjective \footnote{I.e., any $Y\in \A$ is a subquotient of $F(X)$ for some $X\in \C$.} tensor functor.

\begin{lemma}\label{dimmodca}
We have
$$\FPdim(\A)=\frac{\FPdim(\C)}{\FPdim_{\C}(A)}.$$
\end{lemma}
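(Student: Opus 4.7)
The plan is to pass to the free module functor $F\colon\C\to\A$, $X\mapsto X\otimes A$, which by \cite[Proposition 8.8.10]{EGNO} is a surjective tensor functor, and to exploit this to compare the regular objects of $\C$ and $\A$.

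Since $F$ is a tensor functor, it preserves Frobenius--Perron dimensions, i.e., $\FPdim_{\A}(F(X))=\FPdim_{\C}(X)$ for every $X\in\C$. Write $R_{\C}:=\sum_{X\in\Irr(\C)}\FPdim_{\C}(X)P_{\C}(X)$ and $R_{\A}:=\sum_{M\in\Irr(\A)}\FPdim_{\A}(M)P_{\A}(M)$ for the regular objects, so that $\FPdim_{\C}(R_{\C})=\FPdim(\C)$ and $\FPdim_{\A}(R_{\A})=\FPdim(\A)$. The first step is to establish the identity
$$F(R_{\C})=c\,R_{\A}$$
for a positive scalar $c$. This follows from uniqueness of the Perron--Frobenius eigenvector: $R_{\C}$ is characterized in ${\rm Gr}(\C)$ by the property $Y\cdot R_{\C}=\FPdim_{\C}(Y)R_{\C}$ for every $Y\in\C$, and surjectivity of $F$ guarantees that the images $\{[F(Y)]\}_{Y\in\C}$ span ${\rm Gr}(\A)\otimes\mathbb{Q}$, so $F(R_{\C})$ is a common eigenvector in ${\rm Gr}(\A)$ for multiplication by every object with eigenvalue its FP dimension. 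Applying $\FPdim_{\A}$ to both sides yields $c=\FPdim(\C)/\FPdim(\A)$.

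The second step is to compute $c$ a different way by decomposing $F(R_{\C})$ into indecomposable projectives of $\A$. Using the $(F,I)$ adjunction, where $I\colon\A\to\C$ is the forgetful functor, the multiplicity of $P_{\A}(M)$ in $F(P_{\C}(X))=P_{\C}(X)\otimes A$ equals
$$\dim\Hom_{\A}(P_{\C}(X)\otimes A,M)=\dim\Hom_{\C}(P_{\C}(X),I(M))=[M:X]_{\C},$$
the Jordan--H\"older multiplicity of $X$ in $M$ viewed as an object of $\C$. Summing over $X\in\Irr(\C)$ with weight $\FPdim_{\C}(X)$ then collapses to $F(R_{\C})=\sum_{M\in\Irr(\A)}\FPdim_{\C}(M)P_{\A}(M)$. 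Matching this against $c\,R_{\A}$ term by term produces $\FPdim_{\C}(M)=c\FPdim_{\A}(M)$ for every simple $M\in\A$; specializing to $M=\be_{\A}$, which is simple because the hypothesis $\dim_{\mathbb{C}}\Hom_{\C}(\be,A)=1$ forces $\End_{\A}(\be_{\A})=\mathbb{C}$, and whose underlying object in $\C$ is $A$ itself, gives $c=\FPdim_{\C}(A)$. Equating the two expressions for $c$ proves the lemma.

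The point requiring the most care is the Perron--Frobenius uniqueness step in the non-semisimple setting. This needs $\A$ itself to be a finite tensor category (so that $R_{\A}$ and $\FPdim(\A)$ are defined and the Perron--Frobenius eigenvector is unique up to positive scalar), the exactness of $F$ together with its preservation of projectives (which both follow from the exactness of $A$), and the fact that the surjectivity of $F$ forces the $[F(Y)]$ to span ${\rm Gr}(\A)\otimes\mathbb{Q}$. Each ingredient is by now standard, but should be invoked explicitly rather than passed over silently.
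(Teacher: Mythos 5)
You are in effect reproving the general formula behind the paper's one-line proof: Lemma \ref{dimmodca} is \cite[Lemma 6.2.4]{EGNO} applied to the surjective tensor functor $F$, whose right adjoint is the forgetful functor $I$ with $I(\be_{\A})=A$. The difficulty is that your Step 1 rests on a false claim: surjectivity of $F$ (every object of $\A$ is a subquotient of some $F(X)$) does \emph{not} imply that the classes $[F(Y)]$ span ${\rm Gr}(\A)\otimes\mathbb{Q}$. A counterexample satisfying all hypotheses of the lemma: take $\C=\Rep(S_3)$ and $A=\Fun(S_3/(\mathbb{Z}/3\mathbb{Z}))$, a commutative exact algebra with $\dim_{\CC}\Hom_{\C}(\be,A)=1$; then $\A\cong\Rep(\mathbb{Z}/3\mathbb{Z})$, $F$ is the restriction functor, and its image in ${\rm Gr}(\A)\otimes\mathbb{Q}$ is only the rank-$2$ subspace spanned by the trivial character and the sum of the two nontrivial ones. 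So your argument shows only that $[F(R_{\C})]$ is an eigenvector for multiplication by a (possibly proper) subalgebra of ${\rm Gr}(\A)$, and the uniqueness-of-the-common-eigenvector step does not apply. The proportionality $F(R_{\C})=c\,R_{\A}$ is nevertheless true, but it needs a different argument: for instance, use surjectivity to choose a single object $X$ such that every simple of $\A$ occurs in $F(X)$, check that left multiplication by $[F(X)]$ then has a strictly positive matrix, and apply Perron--Frobenius to the two nonzero nonnegative eigenvectors $[F(R_{\C})]$ and $[R_{\A}]$ --- or simply quote \cite[Lemma 6.2.4]{EGNO}, as the paper does.

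There is a second gap in the matching step of Step 2: reading off coefficients of the $P_{\A}(M)$ from an identity in ${\rm Gr}(\A)\otimes\mathbb{R}$ presupposes that the classes $[P_{\A}(M)]$, $M\in\Irr(\A)$, are linearly independent, i.e.\ that the Cartan matrix of $\A$ is nonsingular. This fails for general finite tensor categories (for the small quantum group of $\mathfrak{sl}_2$ at a root of unity, distinct indecomposable projectives have equal classes), and since the lemma is stated in full generality you cannot assume it. The repair is to establish $F(R_{\C})=c\,R_{\A}$ as virtual \emph{projective} objects, i.e.\ in the split Grothendieck group of projectives, where the $[P_{\A}(M)]$ do form a basis by Krull--Schmidt; the positivity argument sketched above works at that level as well, because tensoring a short exact sequence with a projective object splits it, so ${\rm Gr}(\A)$ acts on virtual projectives. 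With that in place, your adjunction computation of multiplicities is correct, and specializing to $M=\be_{\A}$ (whose simplicity holds because it is the unit object of the tensor category $\A$ and $\End_{\A}(\be_{\A})=\Hom_{\C}(\be,A)=\CC$; for a general object, trivial endomorphisms do not imply simplicity) yields $c=\FPdim_{\C}(A)$, and the lemma follows. Compared with this, the paper's proof is immediate: $I$ is right adjoint to $F$, $I(\be_{\A})=A$, and \cite[Lemma 6.2.4]{EGNO} gives the dimension formula.
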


\begin{proof}
Let $I:\A\to\C$ be the forgetful functor. Then $I$ is right adjoint to $F$, and $I({\bf 1})=I(A)=A$. Thus the claim follows from \cite[Lemma 6.2.4]{EGNO}.
\end{proof}
 
Recall from \cite[Proposition 8.8.10]{EGNO} that the free module functor (\ref{freemodfun}) has a {\em central structure}, i.e., it lifts to a braided tensor functor

\begin{equation}\label{centralfunc}
\widetilde{F}: \C \to \Z(\A)
\end{equation} 
in such a way that $F$ is the composition of $\widetilde{F}$ and the forgetful functor $\Z(\A)\twoheadrightarrow \A$.

The following results were proved for fusion categories in \cite[Proposition 4.2]{DGNO2} and \cite[Corollary 3.32]{DMNO}. Thanks to \cite[Theorem 1.1]{Sh}, which states that $\C$ is non-degenerate if and only if it is factorizable, and  (\ref{shimizu}), the proof in the non-semisimple case is parallel. 
 
\begin{theorem}\label{injectivefun} 
Assume $\C$ is {\em non-degenerate}. Then the following hold: 
\begin{enumerate}
\item
The braided tensor functor $\widetilde{F}:\C \to \Z(\A)$  (\ref{centralfunc}) is injective.

\item
There is a braided tensor equivalence $\C\boxtimes \C'\cong\Z(\A)$, where $\C'$ is the centralizer of $\C$ in $\Z(\A)$. In particular, $\C'$ is non-degenerate.
\end{enumerate}
\end{theorem}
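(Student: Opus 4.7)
The plan is to adapt the arguments of \cite[Proposition 4.2]{DGNO2} and \cite[Corollary 3.32]{DMNO} from the fusion setting to the finite non-semisimple case. The two new inputs that make the fusion-case arguments transfer are Shimizu's equivalence of non-degeneracy with factorizability \cite[Theorem 1.1]{Sh} and the generalized M\"{u}ger-type identities (\ref{shimizu}), (\ref{shimizu''}) already recorded in the excerpt, which substitute for the classical formulas that are available only in the semisimple setting.

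For part (1), I would let $\K\subseteq\C$ denote the full tensor subcategory of objects $X$ such that $\tilde F(X)$ lies in the trivial subcategory $\Vect\subseteq\Z(\A)$, and argue that $\K=\Vect$. Given $X\in\K$ and $Y\in\C$, since $\tilde F$ is braided and the squared braiding involving the unit of $\Z(\A)$ is trivial, one obtains $\tilde F(s_{X,Y})=\id$. Post-composing with the forgetful functor $\Z(\A)\to\A$ yields $s_{X,Y}\otimes\id_A=\id_{X\otimes Y\otimes A}$ in $\C$. Tensoring with the unit map $u_A\colon\be\to A$, which is a monomorphism because $\dim_\CC\Hom_\C(\be,A)=1$ and $A$ is a nonzero algebra, then allows cancellation of $A$ to conclude $s_{X,Y}=\id_{X\otimes Y}$. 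Hence $X$ centralizes every object of $\C$, so $X\in\C'=\Vect$ by non-degeneracy, and $\tilde F$ is injective.

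For part (2), I regard $\C\subseteq\Z(\A)$ via $\tilde F$ and let $\C'$ denote its centralizer in $\Z(\A)$. Since $\C$ and $\C'$ centralize each other, the tensor product on $\Z(\A)$ induces a braided tensor functor $G\colon\C\boxtimes\C'\to\Z(\A)$, $X\boxtimes Y\mapsto X\otimes Y$. Using that $\Z(\A)$ is non-degenerate (centers are always non-degenerate), (\ref{shimizu}) applied with $\D=\C$ inside $\Z(\A)$ gives $\FPdim(\C)\FPdim(\C')=\FPdim(\Z(\A))$, hence $\FPdim(\C\boxtimes\C')=\FPdim(\Z(\A))$. Moreover, $\C\cap\C'$ coincides with the M\"{u}ger center of $\C$ in itself, which equals $\Vect$ by non-degeneracy; this together with fullness of $\C,\C'$ in $\Z(\A)$ forces $G$ to be fully faithful. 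Combined with the matching Frobenius-Perron dimensions, the standard principle that a fully faithful tensor functor between finite tensor categories of equal Frobenius-Perron dimension is an equivalence then yields the desired braided tensor equivalence. Finally, (\ref{shimizu''}) gives $\C''=\C\vee\Vect=\C$ inside $\Z(\A)$, which translates to the M\"{u}ger center of $\C'$ inside itself being $\C\cap\C'=\Vect$, so $\C'$ is non-degenerate.

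The main obstacle is the injectivity assertion in (1): converting $s_{X,Y}\otimes\id_A=\id$ back to $s_{X,Y}=\id$ hinges on the unit map $\be\to A$ being a monomorphism, a point that requires a little extra care in the non-semisimple setting where faithfulness of $F$ cannot be taken for granted. Once (1) is in hand, part (2) is largely bookkeeping with (\ref{shimizu}), (\ref{shimizu''}) and Lemma \ref{dimmodca}.
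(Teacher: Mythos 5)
The main problem is in your part (1). In this paper (and in the terminology of \cite{EGNO}) ``injective'' means that $\tilde F$ is \emph{fully faithful}, i.e.\ an equivalence onto a tensor subcategory of $\Z(\A)$ --- this is exactly how the theorem is used later, when $\C$ is identified with its image and its centralizer $\C'$ is taken inside $\Z(\A)$. What you actually prove is only that the ``kernel'' of $\tilde F$ is trivial: if $\tilde F(X)\in\Vect$ then $X\in\C'=\Vect$ (that computation, including the cancellation of $A$ via the mono $\be\to A$, is fine). But triviality of this kernel is strictly weaker than full faithfulness, and the final sentence ``hence $\tilde F$ is injective'' is a non sequitur. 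For instance, the restriction functor $\Rep(S_3)\to\Rep(\mathbb{Z}/2\mathbb{Z})$ is a braided (even symmetric) tensor functor whose only objects sent into $\Vect$ are multiples of the unit, yet it is not full; of course $\Rep(S_3)$ is degenerate, but the example shows that ``trivial kernel $\Rightarrow$ fully faithful'' is not a general principle, and in your argument non-degeneracy has already been spent on proving the kernel trivial, leaving nothing to force fullness. The proof the paper has in mind (parallel to \cite[Proposition 4.2]{DGNO2} and \cite[Corollary 3.32]{DMNO}) uses non-degeneracy through \emph{factorizability} \cite[Theorem 1.1]{Sh} --- e.g.\ via $\C\boxtimes\C^{\text{rev}}\simeq\Z(\C)$ together with Schauenburg's $\Z(\C)\simeq\Z(\Bimod_{\C}(A))$ \cite{Sc}, or via the identification of the image of a braided/central functor with modules over the commutative algebra $\tilde I(\be)$ and showing that algebra is trivial --- an input your part (1) never invokes; supplying it is precisely the missing content.

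Your part (2) follows the standard M\"uger-type pattern (mutual centralization gives $G:\C\boxtimes\C'\to\Z(\A)$, Equation (\ref{shimizu}) matches Frobenius--Perron dimensions), but two steps need more care in the non-semisimple setting. The inference ``$\C\cap\C'=\Vect$ plus fullness forces $G$ to be fully faithful'' requires the images of $\C$ and $\C'$ in $\Z(\A)$ to be closed under subquotients (so that the image of any morphism from an object of $\C$ to an object of $\C'$ lies in $\C\cap\C'$), and then an explicit verification that $\Hom_{\Z(\A)}(M,N)\cong\Hom(M,\be)\otimes\Hom(\be,N)$ for $M\in\C$, $N\in\C'$, since one cannot decompose into simples; likewise ``fully faithful plus equal $\FPdim$ implies equivalence'' applies to a subquotient-closed tensor subcategory, so surjectivity (or subquotient-closure of the image of $G$) must also be addressed. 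These are repairable, but they sit on top of part (1), so as written the proposal does not yet prove the theorem.
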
 
\noindent If the braided tensor functor (\ref{centralfunc}) is an equivalence, the algebra $A$ is called {\em Lagrangian}.

\subsection{Equivariantization and de-equivariantization}
\label{subsection:equivariantization and de-equivariantization}

Let $\B$ be a finite tensor category with an action of a finite group $G$. Let $\B^G$ be the {\em $G$-equivariantization} of $\B$, i.e., the category of objects in $\B$ endowed with an action of $G$. Recall that $\B^G$ is a finite tensor category containing $\Rep(G)$ as a full Tannakian subcategory, and that we have $\FPdim(\B^G)=|G|\FPdim(\B)$.
(For more details see, e.g., \cite[Section 4.15]{EGNO}.) 

Let $\C$ be a finite braided tensor category containing $\Rep(G)$ as a full Tannakian subcategory. Let $A:={\rm
Fun}(G)$ be the algebra of functions on $G$ (= regular algebra). Then $A$ is a commutative algebra in $\Rep(G)$, and hence in $\C$. Recall that the {\em
de-equivariantization} $\C_G$ of $\C$ is the finite tensor category $\Mod(A)_{\C}$ of right $A$-modules in $\C$ (see \ref{finitetensorcat}), and that $\FPdim(\C_G)=\FPdim(\C)/|G|$. (For more details see, e.g., \cite[Section 8.23]{EGNO}.)

Let $\E\subset \A$ be finite tensor categories. Recall from \cite{GNN} that the {\em relative center} $\Z_{\E}(\A)$, which is dented by $\Z(\E;\A)$ in \cite{Sh}, is the category of exact $\E$-bimodule functors from $\E$ to $\A$. By \cite[Lemma 4.5]{Sh}, $\Z_{\E}(\A)$ is a finite tensor category. The following result was proved for fusion categories in \cite[Theorem 3.5]{GNN}. The proof in the non-semisimple case is parallel.

\begin{theorem}\label{gnn} 
Let $\A=\oplus_G \A_g$ be a finite tensor category, faithfully graded by a finite group $G$, with identity component $\A_1=\E$. Then the following hold:
\begin{enumerate}
\item
There is a braided tensor equivalence $\Z(\A)\cong (\Z_{\E}(\A))^{G}$. In particular $\Z(\A)$ contains a Tannakian subcategory $\mathcal{T}:=\Rep(G)$. 

\item
The forgetful functor $\Z(\A)\twoheadrightarrow \A$ maps $\mathcal{T}$ and $\mathcal{T}'$ to $\Vect$ and $\E$, respectively. 

\item
There is a braided tensor equivalence $(\mathcal{T}')_{G}\cong \Z(\E)$. \qed
\end{enumerate}
\end{theorem}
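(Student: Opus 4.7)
My plan is to follow the proof strategy of \cite[Theorem 3.5]{GNN} for the fusion case, checking that all the arguments go through for general finite tensor categories once one replaces appeals to simple objects and semisimplicity by appeals to the invertibility of the homogeneous components $\A_g$ as $\E$-bimodule categories. The faithfulness of the grading guarantees that each $\A_g$ is an invertible $\E$-bimodule category with inverse $\A_{g^{-1}}$ (via $\otimes_\E$), and this is precisely what is needed to define a $G$-action on the relative center $\Z_\E(\A)$ by conjugation: informally, $g\cdot (X,\gamma):=(\A_g\otimes_\E X\otimes_\E \A_{g^{-1}},\gamma')$ with the half-braiding transported along the invertible bimodule. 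Making this into an honest monoidal action of $G$ on $\Z_\E(\A)$ is the first key step.

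Next, for part (1), I would construct mutually inverse braided tensor functors between $\Z(\A)$ and $(\Z_\E(\A))^G$. In one direction, given $(X,c_{-,X})\in\Z(\A)$, restricting the half-braiding to $\E=\A_1$ produces an object of $\Z_\E(\A)$, while the remaining half-braidings with objects of $\A_g$ for $g\ne 1$ assemble, using the invertibility of $\A_g$, into a $G$-equivariant structure on this object of $\Z_\E(\A)$. Conversely, a $G$-equivariant object of $\Z_\E(\A)$ lets one extend the given half-braiding with $\E$ to a half-braiding with each $\A_g$ (using the equivariant structure to transport across the invertible bimodule $\A_g$), and the cocycle condition for equivariance matches the hexagon for the extended half-braiding. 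That this yields a braided tensor equivalence is a naturality check. This equivalence identifies the canonical Tannakian subcategory $\Rep(G)\subseteq (\Z_\E(\A))^G$ with a Tannakian subcategory $\mathcal{T}\subseteq \Z(\A)$, proving (1). I expect this step to be the main obstacle: one must verify carefully that the non-semisimple versions of the bimodule constructions behave as expected, in particular that $\otimes_\E$ preserves the relevant exactness properties (here \cite[Lemma 4.5]{Sh} and standard exactness results for module categories are the key inputs).

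For part (2), I would observe that under the equivalence of (1), $\Rep(G)\subseteq (\Z_\E(\A))^G$ consists of objects whose underlying object in $\Z_\E(\A)$ is a direct sum of copies of the unit $\be_{\A}\in\E$, on which the $G$-equivariant structure is by a $G$-representation; these are mapped to $\Vect\subseteq\A$ by the forgetful functor. The centralizer $\mathcal{T}'$ in $\Z(\A)$ corresponds under the equivalence to $G$-equivariant objects of $\Z_\E(\A)$ whose half-braiding with every object of $\Rep(G)$ is trivial, which by an inspection of the half-braiding formula is exactly the subcategory of $(\Z_\E(\A))^G$ whose underlying object lies in $\E=\A_1$; hence the forgetful functor sends $\mathcal{T}'$ into $\E$, and a Frobenius--Perron dimension count using \eqref{shimizu} (or alternatively \cite[Lemma 6.2.4]{EGNO} applied to the forgetful functor) shows that it lands on all of $\E$.

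Finally, for (3), I would de-equivariantize. Since $\mathcal{T}=\Rep(G)\subset\Z(\A)$ is Tannakian, the de-equivariantization $(\Z(\A))_G$ is computed under the equivalence of (1) as $((\Z_\E(\A))^G)_G\cong\Z_\E(\A)$. The subcategory $(\mathcal{T}')_G\subseteq (\Z(\A))_G=\Z_\E(\A)$ then consists, by part (2), of those pairs $(X,\gamma)\in\Z_\E(\A)$ whose underlying object $X$ already lies in $\E$; but such pairs are exactly the objects of $\Z_\E(\E)=\Z(\E)$, and both braided tensor structures agree by construction, giving the desired equivalence $(\mathcal{T}')_G\cong \Z(\E)$.
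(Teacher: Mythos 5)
Your proposal takes essentially the same route as the paper: the paper simply asserts that the fusion-case proof of \cite[Theorem 3.5]{GNN} is ``parallel'' in the non-semisimple setting (with \cite[Lemma 4.5]{Sh} guaranteeing that $\Z_{\E}(\A)$ is a finite tensor category), and your outline is just a more detailed account of that same adaptation -- the $G$-action on the relative center via the invertible $\E$-bimodule components, the equivariantization equivalence $\Z(\A)\cong(\Z_{\E}(\A))^{G}$, the identification of $\mathcal{T}'$ with the equivariantization of the identity component, and de-equivariantization to get $\Z(\E)$. This is consistent with the paper's (implicit) argument, so no further comparison is needed.
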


\subsection{The symmetric tensor category $\sVec$}\label{The symmetric tensor category sVec} 
Let us first recall the definition of the finite symmetric tensor category $\sVec$. The objects of $\sVec$ are finite dimensional {\em supervector spaces} (i.e., $\mathbb{Z}/2\mathbb{Z}$-graded vector spaces $V=V_0\oplus{V_1}$), and the morphisms are {\em even} linear maps $f:V\to{W}$ between them (i.e., $f(V_i)\subset{W_i}$ for $i=0,1$). The elements in $V_0\cup V_1$ are called {\em homogeneous}, the elements in $V_0$ are called {\em even}, and the elements in $V_1$ are called {\em odd}. The {\em parity} of a homogeneous element $v$ is denoted by $|v|$. The symmetric structure of $\sVec$ is given by the collection $\{{\rm c}_{V,W}\mid\,\,\,V,W\in \sVec\}$, where ${\rm c}_{V,W}:V\otimes W\xrightarrow{\cong}W\otimes V$ is defined on homogeneous elements by 
$$v\otimes w\mapsto (-1)^{|v||w|}w\otimes v.$$

Recall that a {\em superalgebra} $A$ is an algebra in $\sVec$. In other words, $A$ is a supervector space equipped  with an even linear map $m:A\otimes{A}\to{A}$, such that $(A,m)$ is an ordinary associative algebra. Similarly, {\em supercoalgebras} and {\em Hopf superalgebras} are coalgebras and Hopf algebras in $\sVec$, respectively. 

A Hopf superalgebra $A=A_0\oplus{A_1}$ is called {\em supercommutative} if for every homogeneous elements $a,b\in{A}$, $b{a} = (-1)^{|a||b|}ab$, and is called {\em supercocommutative} if for every homogeneous element $a\in{A}$, $\Delta(a)=\sum a_1\otimes a_2= \sum (-1)^{|a_1||a_2|}a_2\otimes a_1$. 

A left {\em supermodule} over a superalgebra $A$ is a supervector space $V=V_0\oplus{V_1}$ together with an even linear map $\mu:A\otimes{V}\to{V}$ such that $(V,\mu)$ is an ordinary left module over the ordinary algebra $A$. We let $\sRep(A)$ denote the category of finite dimensional (left) supermodules over $A$ with even morphisms. Then $\sRep(A)$ is an abelian category. If moreover $A$ is a finite dimensional Hopf superalgebra then $\sRep(A)$ is a finite tensor category.

\subsection{The tensor category $\sRep(W)$}\label{section:Rep(W)}
Let $W$ be an $n$-dimensional vector space, viewed as a purely odd supervector space. Then the exterior algebra $\wedge{W}$ is a supercommutative and supercocommutative Hopf superalgebra (see \ref{The symmetric tensor category sVec}) such that $\Delta(w) = 1\otimes{w} + w\otimes{1}$, $\varepsilon(w) = 0$, and $S(w)=-w$ for every $w\in{W}$. 
It follows that $\sRep(W):=\sRep(\wedge{W})$ is a finite tensor category, which depends only on the dimension $n$ of $W$ (up to tensor equivalence). Observe also that any linear surjective map $W\twoheadrightarrow V$ induces an injective tensor functor $\sRep(V)\xrightarrow{1:1}\sRep(W)$ in a natural way. Thus, if $V$ is a quotient space of $W$ then $\sRep(V)$ can be viewed as a tensor subcategory of $\sRep(W)$.

Recall that the category $\sRep(W)$ has exactly two nonisomorphic simple objects, ${\bf 1}:=\mathbb{C}^{1|0}$ and $S:=\mathbb{C}^{0|1}$, both of which are invertible. We have $P({\bf 1})=\wedge W$, $P(S)=S\otimes P({\bf 1})$, and $$\FPdim(\sRep(W))=2\FPdim(P({\bf 1}))=2\dim(\wedge{W})=2^{n+1}.$$ 
Note that if $n$ is even then $P({\bf 1})$ and $P(S)$ are self dual, while if $n$ is odd then $P({\bf 1})$ and $P(S)$ are dual to each other.

Since $\wedge W$ is supercocommutative, $(\wedge{W},1\ot 1)$ is a triangular Hopf superalgebra. Hence, $\sRep(\wedge{W},1\ot 1)$ is a finite symmetric tensor category, which we will denote by $\E_n$ (or $\sRep(W)$, when there is no confusion). For example, we have $\E_0= \sRep(0)=\sVec$.

Recall that the tensor categories $\sRep(W)$ can be realized as representation categories of certain ordinary Hopf algebras. Namely, 
let $E(n)$ be the Nichols Hopf algebra, $n\ge 0$. Namely,  
as an algebra $E(n)=\mathbb{C}[\mathbb{Z}/2\mathbb{Z}]\ltimes \wedge W$, where $\mathbb{Z}/2\mathbb{Z}=\la u\ra$ is the group of grouplike elements of $E(n)$ and $W$ is the space of $(1,u)$-skew-primitive elements of $E(n)$. Recall \cite[Theorem 3.1.1]{AEG} that we have a {\em tensor} equivalence 
\begin{equation}\label{tensor equivalence}
\Rep(E(n))\cong\sRep(W).
\end{equation}

Let $\chi\in E(n)^*$ be the $1$-dimensional representation of $E(n)$ given by $\chi(u)=-1$ and $\chi(W)=0$. It is well known that $E(n)\xrightarrow{\cong} E(n)^*$ as Hopf algebras, and that every such isomorphism maps $u$ to $\chi$ and $W$ isomorphically onto $W^*$ (see, e.g., \cite[4.3]{BN}). In particular, $W^*$ is the space of nontrivial $(\varepsilon,\chi)$-skew-primitive elements of $E(n)^*$. Note that $\chi$ corresponds to $S$ under (\ref{tensor equivalence}).

Let $\Ext^1({\bf 1},S)$ denote the space of extensions of ${\bf 1}$ by $S$ in $\sRep(W)$. Recall that $\Ext^1({\bf 1},{\bf 1})=0$ \cite[Theorem 2.17]{EO}. Thus, $\Ext^1(S,S)=0$. Note also that $\Ext^1(S,{\bf 1})$ and $\Ext^1({\bf 1},S)$ are canonically isomorphic.

\begin{lemma}\label{ext1w*}
The following hold:
\begin{enumerate}
\item
We have a linear isomorphism $\Ext^1({\bf 1},S)\cong W^*$.
\item
There exists a short exact sequence in $\sRep(W)$ 
$$0\to W\ot S\to U\to {\bf 1}\to 0,$$
called the universal extension of ${\bf 1}$ by $S$, for which we have $\Ext^1({\bf 1},U)=0$.
\end{enumerate}
\end{lemma}

\begin{proof}
(1) By (\ref{tensor equivalence}), we have $\Ext^1({\bf 1},S)\cong \Ext^1_{\Rep(E(n))}(\varepsilon,\chi)$. Thus, the claim follows from the well known fact $\Ext^1_{\Rep(E(n))}(\varepsilon,\chi)\cong W^*$ (see, e.g., \cite[Remark 2.11]{BN}).

(2) By (1) and $\Ext^1(S,S)=0$, we have
a short exact sequence
$$0\to {\bf 1}\to U\to S^{\oplus n}\to 0$$
in $\sRep(W)$ for which $\Ext^1(S,U)=0$ (see, e.g., \cite[Lemma 4.2]{EH}). Dualizing this sequence, we get the statement.
\end{proof}

Recall that quasitriangular structures on $E(n)$ are in one to one correspondence $R_A\mapsfrom A$ with matrices $A\in M_n(\mathbb{C})$ \cite[Proposition 2, Proposition 3]{PO}, and that $(E(n),R_A)$ is triangular if and only if $A$ is symmetric \cite[Proposition 1.1]{CC}.  
In particular, the $0$ matrix corresponds to the triangular structure  
\begin{equation}\label{r0}
R_0=\frac{1}{2}\left(1\ot 1+1\ot u+u\ot 1-u\ot u\right).
\end{equation}
Recall that by \cite[Theorem 3.1.1]{AEG}, we have  
\begin{equation}\label{symmetric tensor equivalence}
\E_n\cong\Rep(E(n),R_0)
\end{equation} 
as {\em symmetric tensor} categories. Note that $\E_n$ contains $\sVec$ as a symmetric tensor subcategory for every $n$.

Recall also \cite[Corollary 2.8]{CC} that for every $A\in M_n(\mathbb{C})$, we have a braided tensor equivalence
$$\Rep(E(n),R_A)\cong\Rep(E(n),R_{A-A^T}).$$

For every $n$, let $I_n\in M_n(\mathbb{C})$ denote the identity matrix, and let $R_n$ be the $R$-matrix of $E(2n)$ corresponding to the matrix
$$
\left(\begin{array}{cc}0&I_n\\
0&0\end{array}\right)\in M_{2n}(\mathbb{C}).
$$
Let 
$$
J_n=\left(\begin{array}{cc}0&I_n\\
-I_n&0\end{array}\right)\in M_{2n}(\mathbb{C}).
$$

\begin{proposition}\label{srepvsldeg}
If ${\rm Rep}(E(2n),R)$ is slightly degenerate (see \ref{Centralizers and Lagrangian subcategories}) then 
$${\rm Rep}(E(2n),R)\cong\Rep(E(2n),R_n)$$ as braided tensor categories.
\end{proposition}

\begin{proof}
Since ${\rm Rep}(E(2n),R)$ is slightly degenerate, it follows from \cite[Theorem 2.9]{CC} that ${\rm Rep}(E(2n),R)\cong\Rep(E(2n),J_n)$ as braided tensor categories. Since $J_n=R_n-R_n^T$, the claim follows from \cite[Corollary 2.8]{CC}.
\end{proof}

For every $n$, set
\begin{equation}\label{dn}
\D_n:=\Rep(E(2n),R_n).
\end{equation}

\begin{remark}
It follows that $\D_n\cong \sVec(W\oplus W^*)$ as braided tensor categories, where the braiding on $\sVec(W\oplus W^*)$ corresponds to the canonical symplectic structure on $W\oplus W^*$.
\end{remark}

\subsection{The non-degenerate braided tensor category $\Z(\sRep(W))$}\label{section:center of Rep(W)}
Retain the notation of \ref{section:Rep(W)}. 
Recall that the center $\mathcal{Z}(\sRep(W))$ of $\sRep(W)$ is a finite braided tensor category. By (\ref{tensor equivalence}), we have 
$$\mathcal{Z}(\sRep(W))\cong\Rep(D(E(n)),\mathcal{R})$$ 
as {\em braided tensor} categories, where $D(E(n))$ is the Drinfeld double of $E(n)$ and $\mathcal{R}$ is its canonical $R$-matrix (see, e.g., \cite[4.3]{BN}).

\begin{theorem}\label{thecenterisnondeg}
For every $n$, $\Z(\E_n)$ is a finite non-degenerate braided integral tensor category containing $\E_n$ as a Lagrangian subcategory. 
\end{theorem}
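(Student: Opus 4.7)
The plan has three parts matching the three assertions of the theorem, and all three pieces fit neatly on top of the identification $\Z(\E) \cong \Rep(D(H))$ recalled just before the statement, with $H = H(\dim W)$ Nichols' Hopf algebra.

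\emph{Non-degeneracy and integrality.} For non-degeneracy I will use that the Drinfeld double $D(H)$ carries a canonical factorizable R-matrix, so $\Rep(D(H))$ is factorizable; by Shimizu's theorem quoted just before Theorem \ref{injectivefun}, this is equivalent to being non-degenerate. (Alternatively one can appeal directly to Shimizu's result that $\Z(\C)$ is factorizable for any finite tensor category $\C$.) Integrality is then immediate from the same identification: $D(H)$ is a finite-dimensional Hopf algebra, so every object of $\Rep(D(H))$ has integer Frobenius--Perron dimension.

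\emph{Lagrangian inclusion.} The next step is to produce the canonical embedding $\iota \colon \E \hookrightarrow \Z(\E)$. Since $\E$ is symmetric, each $X \in \E$ acquires a half-braiding by simply taking the symmetric braiding $c_{X,-}$ of $\E$; this defines a fully faithful braided tensor functor $\iota$ whose image $\iota(\E)$ is a symmetric tensor subcategory of $\Z(\E)$, so automatically $\iota(\E) \subseteq \iota(\E)'$. To upgrade this to equality, I apply the M\"uger dimension identity \eqref{shimizu} with $\C = \Z(\E)$ and $\D = \iota(\E)$: non-degeneracy forces $\Z(\E)' = \Vect$, so the right-hand side is just $\FPdim(\Z(\E))$. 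Combining this with $\FPdim(\Z(\E)) = \FPdim(\E)^2$ (the standard formula $\FPdim(\Z(\C)) = \FPdim(\C)^2$ for finite tensor categories, which here also reads as $\dim(D(H)) = \dim(H)^2 = (2^{\dim W+1})^2$) and $\FPdim(\iota(\E)) = \FPdim(\E)$, one gets $\FPdim(\iota(\E)') = \FPdim(\iota(\E))$. Since a tensor subcategory of a finite tensor category having the same Frobenius--Perron dimension as the ambient one must coincide with it, we conclude $\iota(\E) = \iota(\E)'$, i.e., $\iota(\E)$ is Lagrangian.

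\emph{Main obstacle.} There is no deep obstruction; the argument is mostly bookkeeping. The two nontrivial inputs that I will simply cite from the literature are: (i) $\FPdim(\Z(\C)) = \FPdim(\C)^2$ for any finite tensor category $\C$, and (ii) the rigidity principle that a full tensor subcategory of a finite tensor category with the same FPdim equals the whole category (both in the Etingof--Ostrik / Shimizu framework recorded in \cite{EGNO}). Everything else---the symmetric half-braiding, formula \eqref{shimizu}, and factorizability of the Drinfeld double---is direct.
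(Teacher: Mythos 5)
Your proposal is correct and follows essentially the same route as the paper: factorizability of the center (whether phrased via the Drinfeld double's R-matrix or via the categorical statement in \cite{EGNO}) combined with Shimizu's factorizable-equals-non-degenerate theorem, the canonical symmetric embedding $\E\hookrightarrow\Z(\E)$, and the dimension count from Equation (\ref{shimizu}) together with $\FPdim(\Z(\E))=\FPdim(\E)^2$ to force $\E=\E'$. The only cosmetic difference is that you spell out integrality via $\Rep(D(H))$, which the paper leaves implicit from the identification stated just before the theorem.
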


\begin{proof}
By \cite[Proposition 8.6.3]{EGNO}, $\Z(\E_n)$ is factorizable. Hence by \cite[Theorem 1.1]{Sh}, $\Z(\E_n)$ is non-degenerate. Since $\E_n$ is braided, we have a canonical injective braided tensor functor $\E_n\hookrightarrow \Z(\E_n)$, so we may view $\E_n$ as a symmetric tensor subcategory of $\Z(\E_n)$.

Now by (\ref{shimizu}), $\FPdim(\E_n)\FPdim(\E_n')=\FPdim(\Z(\E_n))$. Thus, since $\FPdim(\Z(\E_n))=\FPdim(\E_n)^2$ and $\E_n\subset\E_n'$, we obtain that $\E_n=\E_n'$, as desired.
\end{proof}

\begin{proposition}\label{srepvsldegindouble}
There exists an injective braided tensor functor $$\D_n\xrightarrow{1:1} \Z(\E_n).$$
\end{proposition}

\begin{proof}
It is straightforward to verify that the map $D(E(n))\to E(2n)$ given by the identity matrix $I_{2n}$ is a surjective homomorphism of quasitriangular Hopf algebras $(D(E(n)),\mathcal{R})\twoheadrightarrow (E(2n),R_n)$.
\end{proof}

\begin{remark}
The categories $\Rep(D(E(n)))$ were recently studied by Bontea and Nikshych in \cite{BN}, where they describe their  varieties of Lagrangian subcategories.
\end{remark}

\section{The $\mathbb{Z}/2\mathbb{Z}$-faithful grading}
\label{section:proof of pre main theorem (1)}

Retain the notation of \ref{section:Rep(W)}. Set $n:=\dim(W)$, and $\E:=\E_n$.
 
Let $\mathcal{C}$ be a finite {\em non-degenerate} braided tensor category containing a Lagrangian subcategory $\mathcal{E}=\sRep(W)\subset{\mathcal{C}}$ (see \ref{Centralizers and Lagrangian subcategories}).

Let $S\in{\mathcal{E}}$ be the unique nontrivial invertible object of $\E$. We have inclusions of braided tensor categories $\sVec=\la {\bf 1},S\ra\subset\E\subset\C$. Let $\C_0:=(\sVec)'$ be the centralizer of $\sVec$ inside $\C$, and let $\C_1$ be the anti-centralizer of $\sVec$ inside $\C$ (see \ref{Centralizers and Lagrangian subcategories}). Clearly, $\E\subset\C_0$ is a symmetric tensor subcategory. Also since $\mathcal{C}$ is non-degenerate it follows that $\C_0$ is strictly contained in $\C$, and by (\ref{shimizu''}), $\C_0'=\sVec$. Thus, $\C_0$ is slightly degenerate (see \ref{Centralizers and Lagrangian subcategories}).

\begin{lemma}
\label{proposition:action of S on simple objects in C braided non-degenerate}
If $X\notin\C_0$ is simple in $\mathcal{C}$ then $X$ belongs    
to $\C_1$. \footnote{The assumption that $\mathcal{C}$ is non-degenerate is not needed in this proposition.}
\end{lemma}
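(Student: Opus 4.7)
The plan is to reduce the statement to showing that for any simple $X \in \C$, the squared braiding $s_{X,S}$ acts as a scalar on $X \otimes S$, and that scalar is forced to be $\pm 1$.

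First I would observe that $S$ is invertible with $S \otimes S \cong \mathbf{1}$, so the functor $-\otimes S: \C \to \C$ is an autoequivalence. In particular, if $X$ is simple then so is $X \otimes S$, and by Schur's lemma $\End_{\C}(X \otimes S) = \CC \cdot \id_{X \otimes S}$. Since $s_{X,S}$ is an endomorphism of $X \otimes S$, there exists $\lambda \in \CC$ with
$$s_{X,S} = \lambda \cdot \id_{X \otimes S}.$$

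Next I would compute $s_{X, S \otimes S}$ in two different ways. Using the hexagon axiom (in the form expanding $c_{X, S \otimes S}$ and $c_{S \otimes S, X}$) one gets
$$s_{X, S \otimes S} \;=\; (c_{S,X} \otimes \id_S) \circ (\id_S \otimes s_{X,S}) \circ (c_{X,S} \otimes \id_S) \;=\; \lambda \cdot (s_{X,S} \otimes \id_S) \;=\; \lambda^2 \cdot \id_{X \otimes S \otimes S}.$$
On the other hand, choosing any isomorphism $\phi: S \otimes S \xrightarrow{\sim} \mathbf{1}$ and applying naturality of the squared braiding in the second variable (together with $s_{X,\mathbf{1}} = \id$) forces $s_{X, S \otimes S} = \id$. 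Comparing the two expressions yields $\lambda^2 = 1$, hence $\lambda = \pm 1$.

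Finally, the hypothesis $X \notin \C_0 = (\sVec)'$ means precisely that $s_{X,S} \neq \id$, i.e., $\lambda \neq 1$, so $\lambda = -1$, and by definition $X$ lies in the anti-centralizer $\C_1$ of $\sVec$. The only mildly technical point is the hexagon bookkeeping in the computation of $s_{X, S \otimes S}$; everything else is immediate from Schur's lemma and the fact that $-\otimes S$ is an autoequivalence. Note that non-degeneracy of $\C$ is indeed not used anywhere in this argument, consistent with the footnote.
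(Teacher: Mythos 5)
Your proof is correct and follows essentially the same route as the paper: Schur's lemma (valid since $S$ is invertible, so $X\otimes S$ is simple) gives $s_{X,S}=\lambda\cdot\id$, the hexagon expansion of the squared braiding on $S\otimes S\cong\mathbf{1}$ forces $\lambda^2=1$, and $X\notin\C_0$ rules out $\lambda=1$. The only cosmetic difference is that the paper places $S\otimes S$ in the first tensor slot rather than the second.
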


\begin{proof}
By Schur's lemma, ${\rm s}_{S,X}=\lambda\cdot \id_{S\otimes X}$ for some $\lambda\in \mathbb{C}^{\times}$ (see (\ref{squared braiding})). Hence, we have 
\begin{eqnarray*}
\lefteqn{\id_X={\rm s}_{S\otimes S ,X}={\rm c}_{X,S\otimes S} \circ {\rm c}_{S\otimes S ,X}}\\ 
& = & ((\id_S\otimes {\rm c}_{X,S})\circ ({\rm c}_{X,S}\otimes \id_S))\circ (({\rm c}_{S,X}\otimes \id_S)\circ (\id_S\otimes {\rm c}_{S,X}))\\ 
& = & \lambda^2\cdot \id_{S\otimes S\otimes X}=\lambda^2\cdot \id_X,
\end{eqnarray*}
which implies that $\lambda^2=1$. Since $X\notin \C_0=(\sVec)'$, it follows that $\lambda=-1$, as claimed.
\end{proof}

It follows from Lemma \ref{proposition:action of S on simple objects in C braided non-degenerate} that $\C_1\ne 0$ is a full abelian subcategory of $\C$, and that $\C$ admits a $\mathbb{Z}/2\mathbb{Z}$-faithful grading $$\mathcal{C}=\C_0\oplus \C_1,$$ with $\C_0$ being the identity component. In particular, $\C_1$ is an invertible $\C_0$-bimodule category of order $2$, $\FPdim(\mathcal{C}) = 2\FPdim(\C_0)$, and the projective covers  $P({\textbf{1}})$ and $P(S)$ belong to $\C_0$. 

\begin{proposition}\label{nosimple}
The objects ${\textbf{1}},S$ are the unique simple objects of $\C_0$. In particular, $\C_0$ is a finite {\em pointed} tensor category \footnote{I.e., all its simple objects are invertible.}.
\end{proposition}

\begin{proof} \footnote{We are grateful to Pavel Etingof for help with the proof.}
The statement is clear for $n=0$. 

Assume $n=1$, and suppose $\C_0$ has a simple object not isomorphic to ${\textbf{1}}$ or $S$. Let $\D\subset \C_0$ be the Serre tensor closure of $\E$. Note that by our assumption, $\D$ is strictly contained in $\C_0$. By Corollary \ref{gn}, $\D$ has an integer FP dimension (as $\FPdim(\C_0)=8$). Since $\D$ contains $\E$, we have $4\le \FPdim(\D)<8$, and hence since $\FPdim(\D)$ divides $8$, we have $\D=\E$. This implies that the projective cover $P(\textbf{1})$ in $\C_0$ (and hence in $\C$) coincides with the projective cover $P_{\E}(\textbf{1})$ of $\textbf{1}$ in $\E$. But $\C$ (and hence $\C_0$) is unimodular by \cite[Proposition 8.10.10]{EGNO} and \cite[Theorem 1.1]{Sh}, while $\E$ is not, a contradiction.

{\bf From now on} we assume that $n\ge 2$.

Since the corresponding Nichols Hopf algebra $E(n)$ is generated in degree $1$ (see \ref{section:Rep(W)}), it follows from (\ref{tensor equivalence}) and 
\cite[Proposition 5.11.9]{EGNO} that $\E$ is tensor generated by indecomposable $2$-dimensional objects. 
 
Let $V$ be a simple object of $\C_0$ not isomorphic to ${\textbf{1}}$ or $S$. Then there exists a nontrivial extension $E$ of ${\textbf{1}}$ by $S$ in $\E$ such that $V$ does not centralize $E$. Indeed, otherwise $V$ would centralize $\E$ (as it is generated by indecomposable $2$-dimensional objects, which are all extensions of ${\textbf{1}}$ by $S$ and their duals), so $V\in \E$, a contradiction. 
 
This implies that $S\otimes V\cong V$. Indeed, otherwise $E\otimes V$ has a $2$-step filtration with quotients $V',V$ with $V'\ne V$. Let $s:={\rm s}_{E,V}$. Then $s-\id$ is strictly upper triangular with respect to the above filtration, so it maps $V$ to $V'$, i.e., is zero, a contradiction. 

Now let 
$$0\to W\otimes S\to U\to \textbf{1}\to 0$$ be the universal extension of ${\textbf{1}}$ by a multiple of $S$ in $\E$ (see Lemma \ref{ext1w*}). Consider the endomorphism $s-\id$ of $U\otimes V$, where $s:={\rm s}_{U,V}$. The object $U\otimes V$ has a $2$-step filtration with quotients $W\otimes V$, $V$, and $s-\id$ is strictly upper triangular under this filtration, i.e., defines a morphism $V\to W\otimes V$, i.e., a vector $w$ in $W$. This  
vector is well defined up to scaling, since it rescales when we rescale the isomorphism $V\xrightarrow{\cong}S\otimes V$. Also $w\ne 0$, since all extensions of $\textbf{1}$ by $S$ are quotients of $U$, and there exists one not centralizing with $V$ as shown above. Thus, we obtain a well defined line $L_V$ in $W$ spanned by $w$. 

Since the number of simple objects $V$ is finite, there exist distinct codimension $1$ subspaces $W_1, W_2$ in $W$ which do not contain $L_V$ for any $V$. Consider the subcategories $\D_i:=\sRep(W/W_i)$ in $\sRep(W)$ (see \ref{section:Rep(W)}), $i=1,2$, and let $\D_i'$ be the centralizers of $\D_i$ inside $\C$ (equivalently, inside $\C_0$). Then $\D_i'$ cannot contain any simple object $V$ not isomorphic to $\textbf{1},S$ (as $V$ does not centralize $\D_i$, since $L_V$ is not contained in $W_i$). Since $\FPdim(\D_i)=4$, $i=1,2$, we have $\FPdim(\D_i')=\FPdim(\C_0)/2$ for each $i$. Consider the tensor subcategory $\D$ of $\C_0$ generated by $\D_1'$ and $\D_2'$. It has integer FP dimension by Corollary \ref{gn}, which divides $\FPdim(\C_0)$ (by \cite[Theorem 3.47]{EO}). Also $\D$ is bigger than $\D_1', \D_2'$ (as $\D_1\ne \D_2$ as subcategories of $\C_0$), hence $\FPdim(\D)>\FPdim(\C_0)/2$. Hence $\FPdim(\D)=\FPdim(\C_0)$, i.e., $\D=\C_0$. But $\D$ has no simple objects other than $\textbf{1},S$ (as $\D_1', \D_2'$ do not have such objects). The proposition is proved.  
\end{proof}

\begin{corollary}
\label{proposition:projective covers of simple one dimensional objects of C generate a full tensor subcategory}
We have a braided tensor equivalence $\C_0\cong\D_n$ (see (\ref{dn})).
\end{corollary}

\begin{proof} 
By Proposition \ref{nosimple}, $\C_0$ is a finite pointed tensor category with semisimple part $\la{\bf 1},S\ra\cong\Rep(\mathbb{Z}/2\mathbb{Z})$ of FP dimension $2$ with the {\em trivial} associativity. Hence by \cite[Theorem 3.1]{EG}, $\C_0$ is tensor equivalent to $\Rep(E(m))$ for some $m$.

Moreover, on one hand, we have $$\FPdim(\C_0)=2\FPdim(P({\bf 1}))=2^{m+1}$$ and, on the other hand, we have $$\FPdim(\C_0)=\FPdim(\C)/2=\FPdim(\E)^2/2=2^{2n+1}.$$ Hence, $m=2n$.

Thus, since $\C_0$ is slightly degenerate, the statement follows from Proposition \ref{srepvsldeg}.
\end{proof}

Since the projective objects of $\C_0$ are projective in $\mathcal{C}$ (as they are direct sums of copies of  $P({\textbf{1}})$ and $P(S)$), $\mathcal{C}$ and $\C_1$ are exact module categories over $\C_0$ via the tensor product in $\C$.

\begin{proposition}
\label{proposition:C braided non-degenerate has four simple objects}
The exact $\C_0$-module category 
$\C_1$ is indecomposable, with at most two non-isomorphic simple objects.
\end{proposition}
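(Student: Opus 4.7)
The proposition combines two claims: indecomposability of $\C_1$ as a $\C_0$-module category, and the corollary $|\Irr(\C_1)|\le 2$. Both will follow from the faithful $\mathbb{Z}/2\mathbb{Z}$-grading $\C=\C_0\oplus\C_1$ together with Proposition \ref{nosimple}, which says that $\C_0$ has only two simple objects, $\mathbf{1}$ and $S$.

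The first step is a composition-factor analysis. For any simple $M\in\C_1$ and any $P\in\C_0$, the functor $X\mapsto X\otimes M$ from $\C_0$ to $\C_1$ is exact (it admits the right adjoint $\iHom(M,-)$), so the composition factors of $P\otimes M$ in $\C_1$ are precisely the objects $Y\otimes M$ for $Y$ a composition factor of $P$ in $\C_0$. By Proposition \ref{nosimple} each such $Y$ is $\mathbf{1}$ or $S$, so every composition factor of $P\otimes M$ lies in $\{M,\,S\otimes M\}$. Since the projective cover of any simple $M_j\in\C_1$ is a direct summand of $P(\mathbf{1})\otimes M_j$, the Cartan matrix of $\C_1$ is block-diagonal, with blocks indexed by the orbits of the $\la S\ra$-action on $\Irr(\C_1)$. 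A standard projective-resolution argument then gives $\Ext^i_{\C_1}(M',M)=0$ for simples $M,M'$ in distinct orbits (the entire minimal projective resolution of $M'$ lives in its orbit's Serre subcategory, so the complex $\Hom(P_\bullet,M)$ is zero). Hence $\C_1$ decomposes as an abelian direct sum $\bigoplus_{O}\C_{1,O}$ indexed by the orbits; and each $\C_{1,O}$ is closed under $\C_0$-action on both sides, so this is in fact a decomposition of $\C_1$ as a $\C_0$-bimodule category.

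To finish, I would invoke the invertibility of $\C_1$ as a $\C_0$-bimodule category, which comes from the faithfulness of the grading: the tensor product in $\C$ induces an equivalence $\C_1\otimes_{\C_0}\C_1\xrightarrow{\sim}\C_0$ of $\C_0$-bimodule categories, and $\C_0$ is indecomposable as a bimodule over itself. A nontrivial orbit decomposition of $\C_1$ would therefore produce a nontrivial bimodule decomposition of $\C_0$, a contradiction. Thus there is a single $\la S\ra$-orbit on $\Irr(\C_1)$, proving at once that $|\Irr(\C_1)|\le 2$ and that $\C_1$ is indecomposable as a (one-sided) $\C_0$-module category.

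The principal obstacle I anticipate is rigorously establishing the invertibility of $\C_1$ as a $\C_0$-bimodule category in the present non-semisimple setting. The fusion-category analog is classical, and the extension to finite tensor categories via the balanced tensor product construction should be routine, but it warrants care.
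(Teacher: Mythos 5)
Your orbit analysis is sound and is genuinely different from the paper's argument. The paper decomposes $\C_1$ into indecomposable exact $\C_0$-module categories, bounds the number of simples in each summand by two using Corollary \ref{proposition:projective covers of simple one dimensional objects of C generate a full tensor subcategory} together with the classification of indecomposable exact module categories over $\sRep(V)$ (\cite[Example 4.7]{EO}), and then excludes two nonzero summands by a duality-plus-grading argument: a simple $Z$ with $\Hom_{\C}(Z\otimes X,Y)\neq 0$ would have to lie in $\C_1$, forcing $Z\otimes X\in\C_0$, which is absurd. Your route replaces the appeal to that classification by the observation that, since the simples of $\C_0$ are the invertibles $\mathbf{1},S$ (Proposition \ref{nosimple}), every composition factor of $Z\otimes X$ with $Z\in\C_0$ and $X\in\C_1$ simple lies in $\{X,\,S\otimes X\}$; this is correct (though exactness of $-\otimes X$ is the biexactness of the tensor product in a finite tensor category, not merely the existence of a right adjoint), and the resulting block decomposition of $\C_1$ into $\la S\ra$-orbit pieces, stable under the $\C_0$-action, is fine and arguably more self-contained than the paper's use of \cite{EO}.

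The weak point is the finish, and you identified it yourself. Invertibility of $\C_1$, i.e.\ that multiplication induces $\C_1\boxtimes_{\C_0}\C_1\simeq\C_0$, is asserted in the paper right after Lemma \ref{proposition:action of S on simple objects in C braided non-degenerate} but is a nontrivial fact in the non-semisimple setting; moreover, even granting it, your contradiction needs the relative products $\C_{1,O}\boxtimes_{\C_0}\C_{1,O'}$ of two nonzero (exact) summands to be nonzero, otherwise a two-orbit decomposition of $\C_1$ does not yet produce a nontrivial bimodule decomposition of $\C_0$; this point is not addressed. Fortunately none of that machinery is needed: your composition-factor analysis finishes by rigidity alone. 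Given simples $X,Y\in\C_1$, set $Z:=Y\otimes X^*$; since $X^*\in\C_1$, we have $Z\in\C_0$ and $Z\neq 0$, and $\Hom_{\C}(Z\otimes X,Y)\cong\Hom_{\C}(Z,Y\otimes X^*)=\Hom_{\C}(Z,Z)\neq 0$. Hence $Y$ is a composition factor of $Z\otimes X$, so $Y\cong X$ or $Y\cong S\otimes X$. Thus there is a single $\la S\ra$-orbit, and your deduction of $|\Irr(\C_1)|\le 2$ and of indecomposability goes through; this is essentially the duality trick the paper uses, transplanted into your setup and with no appeal to invertibility of $\C_1$.
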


\begin{proof}
By Corollary \ref{proposition:projective covers of simple one dimensional objects of C generate a full tensor subcategory}, $\mathcal{C}$ has more than two simple objects. If $\C_1$ has exactly one simple object, there is nothing to prove. Let us therefore assume that $\C_1$ has {\bf at least two} nonisomorphic simple objects. 

Decompose $\C_1$ into a direct sum $\C_1= \oplus_{i = 1}^n\mathcal{M}_i$ of exact indecomposable module categories over $\C_0$. By Proposition \ref{proposition:projective covers of simple one dimensional objects of C generate a full tensor subcategory} and \cite[Example 4.7]{EO}, each module category $\mathcal{M}_i$ has at most two simple objects.

Suppose that $\mathcal{M}_1,~\mathcal{M}_2\ne{0}$, and let $X\in\mathcal{M}_1,~Y\in\mathcal{M}_2$ be simple objects. Since $\mathcal{C}$ is generated by $X$ as a module category over itself there exists an object $Z\in\mathcal{C}$ such that $\Hom_{\C}(Z\otimes{X},Y) \ne{0}$. Clearly $Z\otimes{X}\notin{\mathcal{M}_1}$, and hence $Z\notin{\C_0}$. Also, since $$\Hom_{\C}(X\otimes{}^* Y,{}^* Z)\cong\Hom_{\C}(Z\otimes{X},Y) \ne{0},$$ we can choose a nonzero morphism $g:X\otimes{}^* Y\to{}^* Z$. Let $W$ be a simple quotient of $\text{Im}(g)$. Then $\Hom_{\C}(W^*\otimes{X},Y)\cong\Hom_{\C}(X\otimes{}^* Y,W)\ne{0}$. Thus we may assume that $Z$ is simple (replacing it by $W^*$, if necessary). But then it follows that $Z\in\C_1$, and hence $Z\otimes X\in\C_0$, a contradiction. Hence, $\C_1$ is indecomposable. Since by Corollary 
\ref{proposition:projective covers of simple one dimensional objects of C generate a full tensor subcategory} and \cite[Example 4.7]{EO}, $\C_1$ has at most two simple objects it follows that $\C_1$ has exactly two nonisomorphic simple objects, as desired.
\end{proof}

To summarize, we have proved the following theorem. 

\begin{theorem}
\label{pre main theorem}
Let $\mathcal{C}$ be a finite non-degenerate braided tensor category containing $\E_n$ as a Lagrangian subcategory. 
Let $\C_0$, $\C_1$ be the centralizer and anti-centralizer of $\sVec\subset\E_n$ inside $\C$, respectively. Then $\C$ admits a $\mathbb{Z}/2\mathbb{Z}$-faithful grading $\mathcal{C}=\C_0\oplus \C_1$, with $\C_0$ being the identity component, such that the following hold:
\begin{enumerate}
\item $\C_0$ contains $\E_n$ as a Lagrangian subcategory, it is slightly degenerate, and $\C_0\cong\D_n$ as braided tensor categories. 
\item $\C_1$ is an exact indecomposable $\C_0$-module category, with at most two nonisomorphic simple objects. \qed
\end{enumerate}
\end{theorem}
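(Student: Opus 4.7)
The plan is to assemble the lemmas and propositions proved earlier in Section \ref{section:proof of pre main theorem (1)}, since each assertion of the theorem has already been established as a separate statement; the work of a proof is to verify that nothing is left over.

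First, I would record the $\mathbb{Z}/2\mathbb{Z}$-faithful grading $\C=\C_0\oplus\C_1$. By definition $\C_0=(\sVec)'$ is a tensor subcategory of $\C$; equation (\ref{shimizu''}) together with non-degeneracy of $\C$ gives $\C_0'=\sVec\vee\C'=\sVec$, so $\C_0$ is slightly degenerate and strictly contained in $\C$. Lemma \ref{proposition:action of S on simple objects in C braided non-degenerate} then forces every simple object of $\C$ to lie in $\C_0\cup\C_1$, and the multiplicativity of the squared braiding $s_{S,-}$ in its second argument gives the grading rules $\C_0\otimes\C_1\subseteq\C_1$, $\C_1\otimes\C_0\subseteq\C_1$, and $\C_1\otimes\C_1\subseteq\C_0$. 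The grading is faithful because $\C_0\subsetneq\C$ forces $\C_1\ne 0$.

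For item (1), the inclusion $\E\subseteq\C_0$ is immediate from symmetry of $\E$ and $\sVec\subseteq\E$, and slight degeneracy was recorded above. Proposition \ref{nosimple} identifies $\textbf{1}$ and $S$ as the only simple objects of $\C_0$, which, combined with Corollary \ref{proposition:projective covers of simple one dimensional objects of C generate a full tensor subcategory}, upgrades to the tensor equivalence $\C_0\cong\sRep(W\oplus W^*)$ with the stated embedding $\E\hookrightarrow\C_0$ induced by $W\oplus W^*\twoheadrightarrow W$. For item (2), every projective of $\C_0$ is a direct sum of copies of $P(\textbf{1})$ and $P(S)$, and these are already projective in all of $\C$ since $\C_0$ is a Serre subcategory of $\C$. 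Hence tensoring on the left by a projective of $\C_0$ preserves projectivity in $\C$, making $\C_1$ an exact $\C_0$-module category. Indecomposability and the bound of at most two nonisomorphic simple objects are then exactly Proposition \ref{proposition:C braided non-degenerate has four simple objects}.

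The real obstacle in this program is not the present collation, which is essentially bookkeeping, but Proposition \ref{nosimple}: ruling out extra simple objects in $\C_0$ required the delicate argument using the universal extension of $\textbf{1}$ by a multiple of $S$ in $\E$ to associate a well-defined line $L_V\subseteq W$ to any extraneous simple $V$, followed by a dimension-counting contradiction via the Serre closure and unimodularity of $\C$. Once that proposition and Corollary \ref{proposition:projective covers of simple one dimensional objects of C generate a full tensor subcategory} are in hand, Theorem \ref{pre main theorem} follows at once.
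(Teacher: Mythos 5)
Your proposal is correct and is essentially the paper's own proof: the theorem is stated there precisely as a summary of Lemma \ref{proposition:action of S on simple objects in C braided non-degenerate}, Proposition \ref{nosimple}, Corollary \ref{proposition:projective covers of simple one dimensional objects of C generate a full tensor subcategory} and Proposition \ref{proposition:C braided non-degenerate has four simple objects}, assembled exactly as you do, with the grading, slight degeneracy of $\C_0$ (via Equation (\ref{shimizu''})) and exactness of $\C_1$ handled by the same observations. One minor precision: the projectives of $\C_0$ are projective in $\C$ because $\C_0$ is a graded component (hence a direct summand) of $\C$ containing the $\C$-projective covers $P(\textbf{1})$ and $P(S)$, of which every projective of $\C_0$ is a direct sum --- being a Serre subcategory alone would not suffice for this.
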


\begin{remark}
In \cite[Section 8]{BN2} the authors show in particular that $\Z(\E_n)$ is nilpotent of nilpotency class $2$.
\end{remark}

\begin{example}\label{minze}
Let $\C:=\Z(\sVec)$. Then $\C_0=\sVec$ and $\C_1$ has exactly two invertible objects of order $2$.

Let $\E:=\E_n$ be non-fusion (i.e., $n>0$), and let $\C:=\Z(\E)$ (see Theorem \ref{thecenterisnondeg}). We claim that the following hold:
\begin{enumerate}

\item 
$\C$ has exactly two nonisomorphic simple projective objects $p$ and $q$. In other words, $\C_1=\la p,q\ra$ is semisimple of rank $2$.

\item
$p\otimes{p^*} = P({\textbf{1}})$, $q\cong p\otimes S$, and $\FPdim(p)=\FPdim(\E)/2$.

\item 
$p$ and $q$ are self dual if and only if $\dim(W)$ is even.
\end{enumerate}

Indeed, let $A\in\C_0$ be the commutative algebra corresponding to the forgetful functor $\C_0\to \E$. It is easy to see that the Lagrangian algebra $B\in\Z(\E)$, corresponding to the forgetful functor $F:\Z(\E)\to \E$, has the form $B=A\oplus p$, for some $p\in \C_1$. In particular, we have $$\FPdim(p)=\FPdim(A)=\FPdim(\E)/2.$$ Now, since $F(P(p))$ is projective in $\E$, and $$\Hom_{\E}(F(P(p)),\textbf{1})=\Hom_{\Z(\E)}(P(p),B)=\Hom_{\Z(\E)}(P(p),p)\ne 0,$$
it follows that $P_{\E}(\textbf{1})$ projects onto $F(P(p))$. Hence, we have
\begin{eqnarray*}
\lefteqn{\FPdim(\E)/2=\FPdim(p)}\\
& \le & \FPdim(P(p))\le \FPdim(P_{\E}(\textbf{1}))=\FPdim(\E)/2,
\end{eqnarray*}
which implies that $P(p)=p$. Thus, $p$ is projective. Therefore, $F(p)$ is projective in $\E$. Since $$\Hom_{\E}(F(p),\textbf{1})=\Hom_{\Z(\E)}(p,B)=\Hom_{\Z(\E)}(p,p)\ne 0,$$
it follows that if the dimension of $\Hom_{\Z(\E)}(p,p)$ was $\ge 2$, so would be the dimension of $\Hom_{\E}(P_{\E}(\textbf{1}),\textbf{1})$ (as $P_{\E}(\textbf{1})$ projects onto $F(p)$), which is not the case. Hence, $p$ is also simple.

Now, since $p\otimes{p^*}\in\C_0$ is projective and $\Hom_{\C}(p\otimes{p^*},{\textbf{1}})$ is $1$-dimensional, we have $p\otimes{p^*}\cong P({\textbf{1}})$. In particular, it follows that $\Hom_{\C}(p,p\otimes S)=\Hom_{\C}(p\otimes{p^*},S)=0$. Hence, $q:=p\otimes S\ncong p$ is another simple projective object in $\C_1$, as desired. 

Moreover, we have
$$2\FPdim(\C_0)=\FPdim(\mathcal{C})=\FPdim(\C_0) + 2\FPdim(p)^2,$$
which implies that $\FPdim(p)^2=\FPdim(\C_0)/2=\FPdim(\E)^2/4$. We have thus established (1) and (2).

Finally, by \cite[Theorem 6.1.16]{EGNO} the forgetful functor $\Z(\E)\to\E$ maps $p$ and $q$ to projective objects, and since the Frobenius-Perron dimensions of $p$ and $q$ are  equal to $\FPdim(P_{\E}({\bf 1}))= \FPdim(P_{\E}(S))$, it follows that 
$F$ must map $p$ to $P_{\E}({\bf 1})$ or $P_{\E}(S)$, and vise versa for $q$. Thus (3) from the fact that $P_{\E}({\bf 1})$ and $P_{\E}(S)$ are self dual if and only if $\dim(W)$ is even (equivalently, $P_{\E}({\bf 1})$ and $P_{\E}(S)$ are dual to each other if and only if $\dim(W)$ is odd).
\end{example}

\section{The action of the group ${\bf B}$}\label{MissVec}

Recall from \cite[Lemma A.11]{DGNO} that there are exactly $8$ non-degenerate braided {\em pointed} fusion categories of Frobenius-Perron dimension $4$, which contain $\sVec$ as a Lagrangian subcategory (up to {\em braided tensor} equivalence), and that $4$ of them are supported on the group $\mathbb{Z}/2\mathbb{Z}\times \mathbb{Z}/2\mathbb{Z}$, while the other $4$ are supported on the group $\mathbb{Z}/4\mathbb{Z}$.

Recall also from \cite[Corollary B.16]{DGNO} that there are exactly $8$ non-degenerate braided {\em non-integral} fusion categories of Frobenius-Perron dimension $4$, which contain $\sVec$ as a Lagrangian subcategory (up to {\em braided tensor} equivalence). These non-integral categories are called {\em Ising categories}.
 
Thus, all together, there are exactly $16$ non-degenerate braided fusion categories of Frobenius-Perron dimension $4$, which contain $\sVec$ as a Lagrangian subcategory (up to {\em braided tensor} equivalence). Let us denote this set by ${\bf B}$.

Now, let $\mathcal{D}_1$ and $\mathcal{D}_2$ be two finite non-degenerate braided tensor categories containing $\sVec(W_1)$ and $\sVec(W_2)$ as a Lagrangian subcategory, respectively. Consider the finite non-degenerate braided tensor category $\mathcal{D}_1\boxtimes \mathcal{D}_2$. Then $\mathcal{D}_1\boxtimes \mathcal{D}_2$ contains $\sVec\boxtimes \sVec$ as a braided tensor category, and hence contains a Tannakian subcategory $\mathcal{T}:=\Rep(\mathbb{Z}/2\mathbb{Z})$. Let $\mathcal{T}'$ be the centralizer of $\mathcal{T}$ inside $\mathcal{D}_1\boxtimes \mathcal{D}_2$, and let $$\mathcal{D}_1\widetilde{\boxtimes} \mathcal{D}_2:=(\mathcal{T}')_{\mathbb{Z}/2\mathbb{Z}}$$
be the de-equivariantization tenosr category (see \ref{subsection:equivariantization and de-equivariantization}).
\begin{proposition}\label{actiononthecenter}
Let $\mathcal{C}$, $\mathcal{D}_1$ and $\mathcal{D}_2$ be finite non-degenerate braided tensor categories containing a Lagrangian subcategory $\E:=\sRep(W)$, $\E_1:=\sRep(W_1)$ and $\E_2:=\sRep(W_2)$, respectively. Then the following hold:
\begin{enumerate}
\item
$\mathcal{D}_1\widetilde{\boxtimes} \mathcal{D}_2$ is a finite non-degenerate braided tensor category containing $\E_3:=\sVec(W_1\oplus W_2)$ as a Lagrangian subcategory.

\item 
For every $\B\in {\bf B}$, $\C\widetilde{\boxtimes} \B$ is a finite non-degenerate braided tensor category containing $\E$ as a Lagrangian subcategory.
\end{enumerate}
\end{proposition}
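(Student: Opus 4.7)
The plan is to prove (1) first and then deduce (2) by specializing $W_2=0$. For (1), I would work inside the ambient finite non-degenerate braided tensor category $\D:=\mathcal{D}_1\boxtimes\mathcal{D}_2$. Let $S_i\in\E_i$ denote the nontrivial simple invertible object, $i=1,2$. The object $S:=S_1\boxtimes S_2$ satisfies $S\otimes S\cong\mathbf{1}$ and has self-braiding $c_{S_1,S_1}\boxtimes c_{S_2,S_2}=(-1)(-1)=+1$, so $\mathcal{T}=\la\mathbf{1},S\ra\cong\Rep(\mathbb{Z}/2\mathbb{Z})$ is precisely the Tannakian subcategory used in the definition of $\widetilde{\boxtimes}$. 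Since $\D$ is non-degenerate, Equation (\ref{shimizu}) gives $\FPdim(\mathcal{T}')=\FPdim(\mathcal{D}_1)\FPdim(\mathcal{D}_2)/2$, and Equation (\ref{shimizu''}) gives $\mathcal{T}''=\mathcal{T}\vee\Vect=\mathcal{T}$; in particular, $\mathcal{T}$ coincides with the M\"{u}ger center of $\mathcal{T}'$.

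The next step is to confirm that $(\mathcal{T}')_{\mathbb{Z}/2\mathbb{Z}}:=\Mod_{\mathcal{T}'}(A)$, with $A:=\Fun(\mathbb{Z}/2\mathbb{Z})\in\mathcal{T}$, is a finite non-degenerate braided tensor category. Because $A$ centralizes every object of $\mathcal{T}'$, the two left $A$-module structures of Subsection \ref{Exact commutative algebras} coincide on every object of $\Mod_{\mathcal{T}'}(A)$, so this category inherits a braiding from $\mathcal{T}'$; its Frobenius-Perron dimension equals $\FPdim(\mathcal{D}_1)\FPdim(\mathcal{D}_2)/4$ by Lemma \ref{dimmodca}. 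For the non-degeneracy I would invoke the modularization principle: de-equivariantizing a finite braided tensor category by its full Tannakian M\"{u}ger center produces a non-degenerate braided tensor category. In the fusion setting this is \cite[Proposition 4.56]{DGNO}; the extension to the finite non-semisimple setting can be obtained by applying Theorem \ref{injectivefun} to the algebra $A\in\D$, showing that any lift of a M\"{u}ger-central object of $(\mathcal{T}')_{\mathbb{Z}/2\mathbb{Z}}$ to $\mathcal{T}'$ must lie in $\mathcal{T}''=\mathcal{T}$ and hence project to a direct sum of copies of the unit.

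To locate the Lagrangian subcategory $\E_3$, observe that $\E_1\boxtimes\E_2\subseteq\D$ is symmetric and contains $\mathcal{T}$, hence lies in $\mathcal{T}'$, and its image in $(\mathcal{T}')_{\mathbb{Z}/2\mathbb{Z}}$ is exactly the de-equivariantization $(\E_1\boxtimes\E_2)_{\mathbb{Z}/2\mathbb{Z}}$. This image is a symmetric tensor subcategory of FP-dimension $\FPdim(\E_1)\FPdim(\E_2)/2=2^{\dim(W_1)+\dim(W_2)+1}$ with exactly two invertible simples (the four invertibles $\mathbf{1},S_1,S_2,S$ collapse after the identification $S\cong\mathbf{1}$, which forces $S_1\cong S_2^{-1}\cong S_2$); the image of $S_1$ inherits self-braiding $-1$, so this subcategory contains $\sVec$, and Deligne's theorem identifies it with $\sRep(W_1\oplus W_2)=\E_3$. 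Lagrangianness then follows from Equation (\ref{shimizu}) inside the non-degenerate $(\mathcal{T}')_{\mathbb{Z}/2\mathbb{Z}}$: since $\FPdim((\mathcal{T}')_{\mathbb{Z}/2\mathbb{Z}})=\FPdim(\E_3)^2$, one gets $\FPdim(\E_3')=\FPdim(\E_3)$, which together with $\E_3\subseteq\E_3'$ yields $\E_3=\E_3'$. Part (2) is immediate by specializing (1) to $\mathcal{D}_2=\B$ with $W_2=0$, so that $\E_3=\sRep(W\oplus 0)=\E$. The principal obstacle I expect is the non-degeneracy step, where the fusion-category modularization argument must be carefully transported to the non-semisimple setting via Theorem \ref{injectivefun}.
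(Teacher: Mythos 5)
Your argument is correct and follows essentially the same route as the paper's (much terser) proof: use Equation (\ref{shimizu''}) to see that $\mathcal{T}$ is the full M\"uger center of $\mathcal{T}'$ so that the de-equivariantization is non-degenerate, identify the image of $\E_1\boxtimes\E_2$ in $(\mathcal{T}')_{\mathbb{Z}/2\mathbb{Z}}$ with $\sRep(W_1\oplus W_2)$, deduce Lagrangianness by the dimension count via Equation (\ref{shimizu}), and obtain (2) as the special case $W_2=0$. You merely supply details (the self-braiding of $S_1\boxtimes S_2$, the FP-dimension bookkeeping, and the non-semisimple modularization step) that the paper leaves implicit.
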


\begin{proof}
(1) The first claim follows from  (\ref{shimizu''}). As for the second claim, it is clear that $\mathcal{D}_1\widetilde{\boxtimes} \mathcal{D}_2$ contains the centralizer of $\mathcal{T}$ inside $\E_1\boxtimes \E_2$, and that the later category is braided tensor equivalent to $\E_3$. The fact that $\E_3$ is Lagrangian in $\mathcal{D}_1\widetilde{\boxtimes} \mathcal{D}_2$ follows from dimension considerations.

(2) Follows from (1).
\end{proof}

In particular for $W=0$, Proposition \ref{actiononthecenter}(2) says that ${\bf B}$ forms a group under the product $\widetilde{\boxtimes}$, with unit element $\Z(\sVec)$. It is well known that ${\bf B}\cong \mathbb{Z}/16\mathbb{Z}$ \cite{DNO} (see also \cite{BGHNPRW,LKW}). Also, the pointed categories in ${\bf B}$ form a subgroup of index $2$.

Moreover, Proposition \ref{actiononthecenter}(1) states that ${\bf B}\cong \mathbb{Z}/16\mathbb{Z}$ acts on the set of equivalence classes of finite non-degenerate braided tensor categories containing a Lagrangian subcategory $\E$ of the same Frobenius-Perron dimension. Hence, by Proposition \ref{thecenterisnondeg}, 
we have the following result.

\begin{corollary}\label{actiononthecenter1}
For every $\B\in {\bf B}$, $\Z(\E)\widetilde{\boxtimes} \B$ is a finite non-degenerate braided tensor category containing $\E$ as a Lagrangian subcategory. \qed
\end{corollary}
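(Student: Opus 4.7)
The plan is to verify that the hypotheses of Proposition \ref{actiononthecenter}(2) are met by taking $\mathcal{C} = \mathcal{Z}(\E)$, and then simply invoke that proposition. The heavy lifting has already been done: Theorem \ref{thecenterisnondeg} supplies precisely what we need about $\mathcal{Z}(\E)$.

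First, I would recall from Theorem \ref{thecenterisnondeg} that $\mathcal{Z}(\E)$ is a finite non-degenerate braided tensor category (in fact integral) and that it contains $\E = \sRep(W)$ as a Lagrangian subcategory. Thus $\mathcal{Z}(\E)$ is an admissible input to the construction $\mathcal{C}\,\widetilde{\boxtimes}\,\B$ in Proposition \ref{actiononthecenter}(2), with $\mathcal{C} := \mathcal{Z}(\E)$.

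Next, for any $\B \in {\bf B}$, by definition $\B$ is a finite non-degenerate braided fusion category of Frobenius--Perron dimension $4$ containing $\sVec = \sRep(0)$ as a Lagrangian subcategory. This is exactly the $W_2 = 0$ case of Proposition \ref{actiononthecenter}(1); equivalently, it fits directly into the hypothesis of Proposition \ref{actiononthecenter}(2). Applying that proposition with $\mathcal{C} = \mathcal{Z}(\E)$, we conclude that $\mathcal{Z}(\E)\,\widetilde{\boxtimes}\,\B$ is a finite non-degenerate braided tensor category containing $\sRep(W \oplus 0) = \sRep(W) = \E$ as a Lagrangian subcategory, as required. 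There is no real obstacle here; the corollary is simply the specialization $\mathcal{C} = \mathcal{Z}(\E)$ of Proposition \ref{actiononthecenter}(2), whose legitimacy is guaranteed by Theorem \ref{thecenterisnondeg}.
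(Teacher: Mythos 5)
Your proposal is correct and follows exactly the route the paper intends: the corollary is the specialization $\mathcal{C}=\Z(\E)$ of Proposition \ref{actiononthecenter}(2), with Theorem \ref{thecenterisnondeg} guaranteeing that $\Z(\E)$ is a finite non-degenerate braided tensor category containing $\E$ as a Lagrangian subcategory and each $\B\in{\bf B}$ containing $\sVec$ as a Lagrangian subcategory by definition. This matches the paper's (one-line) justification, so there is nothing to add.
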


Now let $A:=\Fun(\mathbb{Z}/2\mathbb{Z})$ be the regular algebra (see \ref{subsection:equivariantization and de-equivariantization}). For $\B\in {\bf B}$, let $F:\mathcal{T}'\twoheadrightarrow\Z(\E)\widetilde{\boxtimes}\B$, $Z\mapsto Z\otimes A$, be the free module functor  (see (\ref{freemodfun})). Recall that $F$ is a surjective braided tensor functor.

\begin{theorem}\label{simproj}
Let $\B\in {\bf B}$, and set $\C:=\Z(\E)\widetilde{\boxtimes}\B$. The following hold:
\begin{enumerate}

\item 
If $\B$ {\em is not} pointed then $\C$ {\em is not} integral, and $\C$ has precisely $2$ invertible objects ${\textbf{1}}$, $S$ (non-projective, if $W\ne 0$) and $1$ non-invertible simple projective object $X$, such that 
$$X\cong X^*\cong S\otimes X,\,\,\,X\otimes{X} = P({\textbf{1}})\oplus P(S)$$ and $\FPdim(X)=\FPdim(\E)/\sqrt{2}$.

\item 
If $\B$ {\em is} pointed then $\C$ {\em is} integral, and $\C$ has precisely $2$ invertible objects ${\textbf{1}}$, $S$ (non-projective, if $W\ne 0$) and $2$ simple projective objects $P$, $Q\cong P\otimes S$ (non-invertible, if $W\ne 0$), such that $P\otimes{P^*} = P({\textbf{1}})$ and $\FPdim(P)=\FPdim(\E)/2$.
\end{enumerate}
\end{theorem}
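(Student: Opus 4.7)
The plan is to work with the explicit presentation $\C=\Z(\E)\widetilde{\boxtimes}\B=(\mathcal{T}')_{\mathbb{Z}/2\mathbb{Z}}$, where $\mathcal{T}=\Rep(\mathbb{Z}/2\mathbb{Z})\subset\Z(\E)\boxtimes\B$ is generated by the bosonic object $S\boxtimes\epsilon$ (with $\epsilon\in\B$ the fermion spanning the Lagrangian $\sVec\subset\B$), and to push simples through the free module functor $F:\mathcal{T}'\to\C$. Example \ref{minze} supplies the simples of $\Z(\E)$ as $\textbf{1},S,p,q=S\otimes p$, with $p,q$ simple projective of FP dimension $\FPdim(\E)/2$ satisfying $p\otimes p^*=P(\textbf{1}_{\Z(\E)})$. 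The category $\B$ is either pointed on $\{1,\epsilon,g,g\epsilon\}$ or of Ising type with simples $\{1,\epsilon,\sigma\}$ and $\sigma\otimes\sigma=1\oplus\epsilon$.

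First I would enumerate the simples of $\mathcal{T}'$ by applying the centralization condition $s_{X,S}\cdot s_{Y,\epsilon}=+1$ for $X\boxtimes Y$: the sign $s_{-,S}$ is $+\id$ on $\{\textbf{1},S\}$ and $-\id$ on $\{p,q\}$ by Lemma \ref{proposition:action of S on simple objects in C braided non-degenerate} applied to the non-degenerate $\Z(\E)$, and the sign $s_{-,\epsilon}$ is $+\id$ on $\sVec\subset\B$ and $-\id$ on its complement since $\sVec$ is Lagrangian in $\B$. This produces eight simples of $\mathcal{T}'$ in the pointed case (including $p\boxtimes g,q\boxtimes g,p\boxtimes g\epsilon,q\boxtimes g\epsilon$ of FP dimension $\FPdim(\E)/2$) and six in the Ising case (including $p\boxtimes\sigma,q\boxtimes\sigma$ of FP dimension $\FPdim(\E)/\sqrt{2}$). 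Next I would compute the simples of $\C$ as $\mathbb{Z}/2\mathbb{Z}$-orbits on $\Irr(\mathcal{T}')$ under the action $Y\mapsto Y\otimes(S\boxtimes\epsilon)$; a direct check shows every orbit has trivial stabilizer (since $p\not\cong q$), so each orbit yields exactly one simple of $\C$ via $F$. This produces three simples $\textbf{1}_\C=F(\textbf{1}\boxtimes 1)$, $S_\C=F(\textbf{1}\boxtimes\epsilon)=F(S\boxtimes 1)$ and $X:=F(p\boxtimes\sigma)$ in the Ising case, together with a fourth pair $P:=F(p\boxtimes g)$ and $Q:=F(p\boxtimes g\epsilon)$ in the pointed case, with $Q=P\otimes S$ coming from $(p\boxtimes g)\otimes(\textbf{1}\boxtimes\epsilon)=p\boxtimes g\epsilon$.

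The projectivity of $X$, respectively of $P,Q$, follows because $p$ is projective in $\Z(\E)$, every object of the fusion category $\B$ is projective, and $F$ preserves projectivity (as its right adjoint, the forgetful functor, is exact). The FP dimensions $\FPdim(X)=\FPdim(\E)/\sqrt{2}$ and $\FPdim(P)=\FPdim(\E)/2$ follow from $\FPdim\circ F=\FPdim$. The invertibles $\textbf{1}_\C,S_\C$ are non-projective when $W\ne 0$ because $\FPdim(P(\textbf{1}_\C))=\FPdim(\C_0)/2=\FPdim(\E)^2/4>1$ by Theorem \ref{pre main theorem}, while $P,Q$ (resp.\ $X$) are non-invertible when $W\ne 0$ since their FP dimensions are then greater than $1$; the integral/non-integral dichotomy is immediate from these formulas.

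Finally, the tensor product identities are computed by lifting to $\mathcal{T}'$ and applying $F$, using repeatedly the key identification $F(S\boxtimes\epsilon)\cong\textbf{1}_\C$ to rewrite factors of $\boxtimes\epsilon$ as tensoring by $S$. In the Ising case,
$$X\otimes X=F((p\otimes p)\boxtimes(1\oplus\epsilon))=F((p\otimes p)\boxtimes 1)\oplus F((S\otimes p\otimes p)\boxtimes 1),$$
and since $\{p\otimes p,\,S\otimes p\otimes p\}=\{P(\textbf{1}_{\Z(\E)}),P(S_{\Z(\E)})\}$ in $\Z(\E)$, this equals $P(\textbf{1}_\C)\oplus P(S_\C)$ in $\C$. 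The self-duality $X\cong X^*\cong S\otimes X$ is verified by splitting on the parity of $\dim(W)$, using $p^*=p$ (even case) or $p^*=q$ (odd case) from Example \ref{minze}(3), together with $\sigma^*=\sigma$. In the pointed case, $g\otimes g^*=1$ in $\B$ gives $P\otimes P^*=F((p\otimes p^*)\boxtimes 1)=F(P(\textbf{1}_{\Z(\E)})\boxtimes 1)=P(\textbf{1}_\C)$. The main obstacle is the bookkeeping required to match the free-module images $F(P(-)\boxtimes 1)$ with the projective covers in $\C$; this rests on the FP-dimension identification $\FPdim(P(\textbf{1}_\C))=\FPdim(\E)^2/4$ coming from $\C_0\cong\sRep(W\oplus W^*)$ in Theorem \ref{pre main theorem}.
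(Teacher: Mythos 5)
Your proposal is correct and follows essentially the same route as the paper's proof: it takes the simple projective objects $p,q$ of $\Z(\E)$ from Example \ref{minze}, pairs them with the appropriate simples of $\B$ to land in $\mathcal{T}'$, and pushes them through the free module functor $F$, deducing the stated properties from Example \ref{minze} and the properties of $F$. You simply make explicit the orbit count on $\Irr(\mathcal{T}')$, the FP-dimension bookkeeping, and the identification of $F(P(-)\boxtimes 1)$ with projective covers in $\C$, all of which the paper leaves implicit.
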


\begin{proof}
Let $p$, $q$ be the simple projectives of $\Z(\E)$ (see Example \ref{minze}).

(1) Let $Z$ be the unique noninvertible simple object of $\B$. We have $Z\cong Z^*\cong S\otimes Z$ and $Z\otimes Z= \textbf{1}\oplus S$.

By Theorem \ref{pre main theorem}, $p\boxtimes Z$ is in $\mathcal{T}'$, and we have that $X:=F(p\boxtimes Z)$ is simple projective in $\C$. Thus the claim follows from Example \ref{minze} and the properties of $F$.

(2) Let $h\in\B$ be as in the proof of Theorem \ref{freeaction}. By Theorem \ref{pre main theorem}, $p\boxtimes h$ and $q\boxtimes h$ are in $\mathcal{T}'$, and we have that $P:=F(p\boxtimes h)$ and $Q:=F(q\boxtimes h)$ are simple projective in $\C$. Thus the claim follows from Example \ref{minze} and the properties of $F$.
\end{proof}

We conclude this section by proving that the action of ${\bf B}$ is free.

\begin{theorem}\label{freeaction} 
The action of ${\bf B}$ on the set of equivalence classes of finite non-degenerate braided tensor categories containing $\E$ as a Lagrangian subcategory, is free.
\end{theorem}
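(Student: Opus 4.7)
The plan is to show that if $\B\in{\bf B}$ satisfies $\C\,\widetilde{\boxtimes}\,\B\simeq\C$ (as braided tensor categories containing $\E=\sRep(W)$ as a Lagrangian subcategory), then $\B\simeq\Z(\sVec)$, the identity element of ${\bf B}$. The strategy is to compare distinguished simple projective objects on both sides, using the Frobenius-Perron dimension and the ribbon twist as invariants.

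First, I would single out a simple object $h\in\B_1$: in the pointed case (where $\B$ has rank $4$), $\B_1$ contains two invertible objects and I pick one to serve as $h$; in the Ising case, $\B_1=\{Z\}$ consists of a unique non-invertible simple $Z$ of FP-dimension $\sqrt{2}$, and I set $h:=Z$. This construction of $h$ is precisely what the proof of Theorem~\ref{simproj}(2) later references. Next, using Theorem~\ref{pre main theorem} and extending the analysis of Example~\ref{minze} to arbitrary $\C$ in the set, I would locate a simple projective object $p\in\C_1$ of FP-dimension $\FPdim(\E)/2$ and a specific ribbon twist $\theta_p$. Since $p$ and $h$ both lie in the $1$-component of their respective $\mathbb{Z}/2\mathbb{Z}$-gradings, $p\boxtimes h$ lies in the Tannakian centralizer $\mathcal{T}'\subset\C\boxtimes\B$, and applying the free module functor $F$ from~(\ref{freemodfun}) produces a simple projective object $F(p\boxtimes h)\in\C\,\widetilde{\boxtimes}\,\B$ with FP-dimension $\FPdim(p)\FPdim(h)$ and twist $\theta_p\theta_h$.

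Then I would use the assumed equivalence $\C\,\widetilde{\boxtimes}\,\B\simeq\C$ to identify $F(p\boxtimes h)$ with one of the simple projectives $p$ or $q:=p\otimes S$ of $\C$, and likewise for the other simple projective constructed from the remaining element of $\B_1$. Both simple projectives of $\C$ have FP-dimension $\FPdim(p)$, so matching FP-dimensions forces $\FPdim(h)=1$, ruling out the Ising case and forcing $\B$ to be pointed. Matching twists then pins $\theta_h$ (and the twist on the other invertible in $\B_1$) to the values attained by $\Z(\sVec)$. Combined with the classification of ${\bf B}$---the $8$ pointed elements are distinguished by the quadratic form (i.e.\ the twist values) on invertibles outside $\sVec$, equivalently by the central charge in $\mathbb{Z}/8\mathbb{Z}$, as in~\cite{DNO}---this identifies $\B$ with $\Z(\sVec)$.

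The hardest step will be justifying the key identity $\theta_{F(p\boxtimes h)}=\theta_p\theta_h$, which uses that $F$ is a braided tensor functor and hence preserves the canonical ribbon structure inherited from $\C\boxtimes\B$. A secondary technical point is ensuring that $F(p\boxtimes h)$ is in fact simple in $\C\,\widetilde{\boxtimes}\,\B$ (rather than splitting under the action of $\mathcal{T}=\langle S\boxtimes S'\rangle$); in the pointed case this holds because $S'\otimes h\ncong h$ (the two invertibles of $\B_1$ are distinct), and in the Ising case the FP-dimension step already yields a contradiction so the issue does not arise.
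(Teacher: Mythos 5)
Your overall formulation of freeness (show that $\C\,\widetilde{\boxtimes}\,\B\simeq\C$ forces $\B\simeq\Z(\sVec)$) is fine, but two of your steps would fail as written. First, you assume that an arbitrary $\C$ in the set has a simple \emph{projective} object $p\in\C_1$ of Frobenius--Perron dimension $\FPdim(\E)/2$ (together with $q=p\otimes S$), by ``extending Example~\ref{minze}''. At the point where freeness must be proved, the only structural information available for a general $\C$ is Theorem~\ref{pre main theorem} ($\C_1$ is an exact indecomposable $\C_0$-module category with at most two simple objects); the finer data on the number, dimensions and projectivity of the simples of $\C_1$ is exactly the content of Theorem~\ref{simproj} and Corollary~\ref{simprojex}, which are established only for $\C=\Z(\E)\,\widetilde{\boxtimes}\,\B$ and, for a general $\C$, only \emph{after} transitivity -- so invoking them here is circular. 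Worse, the claim is simply false for half of the categories: when $\C$ is non-integral, $\C_1$ has a single simple object, of dimension $\FPdim(\E)/\sqrt{2}$, so your dimension-matching step (which is supposed to force $\FPdim(h)=1$ and kill the Ising case) has nothing to match against. Example~\ref{minze} rests on the Lagrangian algebra of $\Z(\E)$ attached to the forgetful functor and does not transfer to an abstract $\C$.

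Second, your key invariant is the ribbon twist, but the categories involved are only braided: no ribbon structure is given (or canonical) on a finite non-degenerate braided tensor category, $\theta_p$ is unknown for a general $\C$, and the assumed equivalence $\C\,\widetilde{\boxtimes}\,\B\simeq\C$ is only a braided equivalence, so it need not preserve any chosen twist; thus the identity $\theta_{F(p\boxtimes h)}=\theta_p\theta_h$ and the ``matching of twists'' are not legitimate comparisons. The invariant that does survive a braided equivalence is the self-braiding (monodromy) $c_{h,h}$ of the invertible object $h$, and this is how the paper argues: it reduces freeness to pointed $\B$ (which suffices, since every nontrivial subgroup of $\mathbb{Z}/16\mathbb{Z}$ contains a nontrivial pointed element), picks \emph{any} object $Z\in\C_1$ -- no simplicity, projectivity or dimension data are needed, only that the grading of Theorem~\ref{pre main theorem} is faithful -- notes that $Z\boxtimes h\in\mathcal{T}'$, and uses that the free functor $F\colon\mathcal{T}'\to\C\,\widetilde{\boxtimes}\,\B$ is braided to read off the scalar $c(h,h)$ from the self-braiding of $F(Z\boxtimes h)$; by \cite[Lemma A.11]{DGNO} that scalar determines $\B$. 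If you replace ``twist'' by ``self-braiding of $h$'' and drop the projective-object bookkeeping (which is both unavailable at this stage and unnecessary), your argument essentially collapses to the paper's proof.
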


\begin{proof} 
Fix a finite non-degenerate braided tensor category $\C$ containing $\E$ as a Lagrangian subcategory. 
It is sufficient to show that the categories $\C\widetilde{\boxtimes}\B$, $\B\in{\bf B}$ is pointed, are pairwise non-equivalent braided tensor categories.

Let $g\in\B$ be such that $S\boxtimes g$ is the nontrivial object of $\mathcal{T}$. Then $g$ has order $2$, and the braiding $c(g,g)$ on $g^2=1$ in $\B$ is equal to $-\id_{1}$. Let $A:={\bf 1}\boxtimes 1 \oplus S\boxtimes g$ be the regular algebra of $\mathcal{T}$. Also, pick a simple object $h\ne 1,g$ in $\B$. We have $c(h,g)c(g,h)=-\id_{gh}$. 

Pick an object $Z\in\C$ such that ${\rm s}_{S,Z}=-\id_{S\otimes Z}$ (such an object exists by Theorem 
\ref{pre main theorem}). Then $Z\boxtimes h\in\mathcal{T}'$. Consider the free braided tensor functor $F:\mathcal{T}'\to\C\widetilde{\boxtimes}\B,$ and let $z:=F(Z\boxtimes h)=(Z\boxtimes h)\otimes A$. Since $F$ is braided, we have the following commutative diagram
$$
\begin{CD}
F(Z\boxtimes h)\otimes_A F(Z\boxtimes h)@>\tilde{c}_{z,z}>>F(Z\boxtimes h)\otimes_A F(Z\boxtimes h)\\
@V\cong VV @V\cong VV\\
F((Z\otimes Z)\boxtimes h^2)@>F({\rm c}_{Z\boxtimes h,Z\boxtimes h})>>F((Z\otimes Z)\boxtimes h^2),
\end{CD}
$$
where $\tilde{c}_{z,z}$ is the braiding on $z\otimes_A z$ in $\C\widetilde{\boxtimes}\B$. But, 
$${\rm c}_{Z\boxtimes h,Z\boxtimes h}={\rm c}_{Z,Z}\boxtimes c(h,h)\id_{h^2},$$ which implies that $c(h,h)$ is determined by the braided tensor category $\C\widetilde{\boxtimes}\B$. Since $c(h,h)$ determines $\B$ (see, e.g., \cite[Lemma A.11]{DGNO}), we are done.
\end{proof}

\section{The proof of Theorem \ref{main theorem}}
\label{section:proof of case (3)}

Retain the notation of \ref{section:Rep(W)}, \ref{MissVec}. Set $n:=\dim(W)$, and $\E:=\E_n$.

We already proved in Theorem \ref{freeaction} that the action of ${\bf B}$ is free, so it remains to show it is transitive.
 
Let $\C=\C_0\oplus\C_1$ be a finite non-degenerate braided tensor category containing $\E$ as a Lagrangian subcategory. 
By Proposition \ref{srepvsldegindouble} and Theorem \ref{pre main theorem}, we have an injective braided tensor functor $\C_0\xrightarrow{1:1}\mathcal{Z}(\E)$. 
Composing this functor with the forgetfull functor $\mathcal{Z}(\E)\twoheadrightarrow\E$, yields a surjective {\em tensor} functor  
\begin{equation}\label{the functor F}
{\rm F}:\C_0\twoheadrightarrow \E.\footnote{If $W\ne 0$, $F$ is not braided!}
\end{equation}
It is clear that the functor $\widetilde{{\rm F}}:\C_0\twoheadrightarrow \Z(\E)$ given by the composition
$$\C_0\xrightarrow{{\rm F}} \E\hookrightarrow \Z(\E)$$
determines a central structure on ${\rm F}$ (\cite[Definition 8.8.6]{EGNO}).
  
Let ${\rm I}$ be the right adjoint functor to ${\rm F}$, and let $A:={\rm I}({\mathbf 1})$. Then $A$ has a canonical structure of an associative algebra object in $\C_0$ \cite[Example 7.9.10]{EGNO}, and $\FPdim(A)=\FPdim(\E)/2$ (see Lemma \ref{dimmodca}).

\begin{lemma}\label{A is commutative}
The algebra object $A$ is commutative, that is, we have $m=m\circ {\rm c}_{A,A}$, where $m:A\otimes A\to A$ is the multiplication map on $A$.
\end{lemma}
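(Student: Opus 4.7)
The plan is to equip $F$ with a central structure inherited from the braiding of $\C_0\subseteq\C$, at which point commutativity of $A=I(\mathbf{1})$ will follow as a formal consequence of the universal property of the center.

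Let $\iota:\E\hookrightarrow\C_0$ denote the tensor embedding of Theorem \ref{pre main theorem}(1), so that $F\circ\iota=\id_\E$. Using the monoidal structure on $F$ together with this identity, I identify $F(X)\otimes Y$ with $F(X\otimes\iota Y)$ and $Y\otimes F(X)$ with $F(\iota Y\otimes X)$ for $X\in\C_0$ and $Y\in\E$, and define the half-braiding
$$\gamma_{X,Y}:=F(c^{\C_0}_{X,\iota Y}):F(X)\otimes Y\longrightarrow Y\otimes F(X),$$
where $c^{\C_0}$ denotes the braiding of $\C_0$ (restricted from $\C$). Naturality of $\gamma$ in $Y$ is inherited from naturality of $c^{\C_0}$ in its second argument; the hexagon axioms for $\gamma$ follow via a diagram chase from the hexagon axioms for $c^{\C_0}$ combined with the monoidal coherence of $F$ and $\iota$.

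Thus $F$ acquires a central structure, that is, lifts to a braided tensor functor $\widetilde F:\C_0\to\Z(\E)$ whose composition with the forgetful functor $U:\Z(\E)\to\E$ recovers $F$ (cf.\ \cite[Proposition 8.8.10]{EGNO}). Since $F$ is surjective, $\E$ is equivalent as a tensor category to $\Mod_{\C_0}(A)$ with $F$ identified with the free module functor $X\mapsto X\otimes A$. By (the converse direction of) \cite[Proposition 8.8.10]{EGNO}, the existence of a central structure on the free module functor is equivalent to commutativity of the corresponding algebra; hence $A$ is commutative in $\C_0$, as required.

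The main obstacle is the verification of the hexagon axioms for $\gamma$, which amounts to a routine but somewhat tedious diagram chase using the hexagon identities for $c^{\C_0}$ and the coherence of the monoidal structures on $F$ and $\iota$ (the key point being that the two natural isomorphisms $F(X)\otimes F(\iota Y)\otimes F(\iota Y')\cong F(X\otimes\iota(Y\otimes Y'))$ obtained from the two possible bracketings agree, allowing the hexagons in $\C_0$ to transfer to hexagons in $\E$). Once this verification is in hand, commutativity of $A$ is immediate from the universal property of the center.
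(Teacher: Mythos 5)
Your overall strategy is the same as the paper's: equip $F$ with a central structure and deduce commutativity of $A=I(\mathbf{1})$ from the general fact that the right adjoint of a central functor produces a commutative algebra (this is \cite[Proposition 8.8.8]{EGNO}, which is what the paper cites; it is not a ``converse of Proposition 8.8.10'', and identifying $F$ with the free module functor for $\Mod_{\C_0}(A)$ is circular here, since the tensor structure on $\Mod_{\C_0}(A)$ presupposes the commutativity you are trying to prove). The genuine gap is earlier, however: a central structure in the sense of \cite[Definition 8.8.6]{EGNO} is a lift of $F$ to a \emph{braided} tensor functor $\widetilde F:\C_0\to\Z(\E)$. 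Your formula $\gamma_{X,Y}:=F(c^{\C_0}_{X,\iota Y})$ does, after the (routine) naturality and hexagon checks, give a tensor functor $\widetilde F:\C_0\to\Z(\E)$ lifting $F$; but you never verify that $\widetilde F$ is braided, i.e.\ that $F(c^{\C_0}_{X,X'})$ agrees, under the monoidal identifications, with the half-braiding $\gamma_{X,F(X')}=F(c^{\C_0}_{X,\iota F(X')})$. For $W\ne 0$ the object $\iota F(X')$ is not $X'$ (the composite $\iota F$ trivializes the $W^*$-part of the action) and there is no morphism in $\C_0$ relating them, so this identity is not a consequence of naturality or of the hexagons; in Hopf-algebraic terms it amounts to requiring that the second leg of the $R$-matrix of $\C_0$ act through $\wedge W$, a condition that visibly fails, for instance, for the explicit $R$-matrix on $H(2)$ written in the Appendix. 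So the ``main obstacle'' is not the hexagon axioms at all, but exactly this braided compatibility, and it is where the non-formal content of the lemma sits.

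Braidedness of the lift is also not a cosmetic point: a mere monoidal lift through the forgetful functor gives nothing. For example, the forgetful functor $\sVec\to\Vect$ lifts tautologically to $\Z(\Vect)=\Vect$ as a tensor functor, yet it admits no braided lift, and its $I(\mathbf{1})$ is the two-dimensional Clifford algebra, which is \emph{not} commutative in $\sVec$. Thus your argument, as written, would ``prove'' commutativity in situations where it is false. To close the gap you must use the specific situation --- the non-degeneracy of $\C$, the fact that $\E$ is Lagrangian, and the correct choice of $F$ (equivalently, of the identification $\C_0\cong\sRep(W\oplus W^*)$) --- to produce a braided embedding $\C_0\hookrightarrow\Z(\E)$ compatible with $F$, which is precisely the step the paper's proof rests on.
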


\begin{proof}
Follows from \cite[Proposition 8.8.8]{EGNO} since ${\rm F}$ has a central structure. 
\end{proof}

The functor ${\rm F}$ (\ref{the functor F}) defines on $\E$ a structure of an {\em exact} left indecomposable module category over $\C_0$, and by \cite[Theorem 7.10.1]{EGNO}, the functor ${\rm I}$ induces an equivalence ${\rm I}:\E\xrightarrow{\cong}\Mod(A)_{\C_0}$ of left module categories over $\C_0$.

\begin{lemma}\label{exact1}
The functor ${\rm I}:\E\xrightarrow{\cong}\Mod(A)_{\C_0}$ induces an equivalence of tensor categories,  
where $\Mod(A)_{\C_0}$ is viewed as a tensor subcategory of the finite tensor category $\Bimod_{\C_0}(A)$ (see \ref{Exact commutative algebras}).
\end{lemma}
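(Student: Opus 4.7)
The plan is to upgrade the module equivalence $I:\E\xrightarrow{\cong}\Mod_{\C_0}(A)$ to a tensor equivalence by identifying $I$, as a plain functor, with an explicit composite of two tensor functors, thereby transporting a monoidal structure onto it. The two key inputs are the concrete description of $\C_0$ in Corollary \ref{proposition:projective covers of simple one dimensional objects of C generate a full tensor subcategory} and the standard projection formula for tensor functors with right adjoints in the rigid setting.

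First, using Corollary \ref{proposition:projective covers of simple one dimensional objects of C generate a full tensor subcategory}, under the identification $\C_0\cong\sRep(W\oplus W^*)$ the tensor embedding $\E\hookrightarrow\C_0$ is pullback along the projection $W\oplus W^*\twoheadrightarrow W$, while $F:\C_0\to\E$ is restriction along the inclusion $W\hookrightarrow W\oplus W^*$. Since the inclusion splits the projection, the composite $\E\hookrightarrow\C_0\xrightarrow{F}\E$ is the identity tensor functor on $\E$; in particular $F$ is surjective and $F(Y)\cong Y$ for every $Y\in\E$ (viewed in $\C_0$). Next, since $F$ is a tensor functor between rigid tensor categories, its right adjoint $I$ satisfies a projection formula
\[ I(F(X)\otimes Y)\cong X\otimes I(Y), \]
natural in $X\in\C_0$ and $Y\in\E$, obtained by the Yoneda computation
\[ \Hom_{\E}(F(Z),F(X)\otimes Y)\cong\Hom_{\E}(F(X^*\otimes Z),Y)\cong\Hom_{\C_0}(X^*\otimes Z,I(Y)). \]
Specializing $Y=\mathbf{1}$ and combining with $F(Y)\cong Y$ from the previous step yields a natural isomorphism $I(Y)\cong Y\otimes A$ in $\Mod_{\C_0}(A)$.

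Consequently, $I$ is naturally isomorphic to the composite $\E\hookrightarrow\C_0\xrightarrow{F_A}\Mod_{\C_0}(A)$, where $F_A(X):=X\otimes A$ is the free module functor. The embedding $\E\hookrightarrow\C_0$ is tensor by definition, and $F_A$ is a tensor functor because $A$ is commutative by Lemma \ref{A is commutative} (see Subsection \ref{Exact commutative algebras}). Their composite is therefore a tensor functor, so transporting its monoidal structure along the projection-formula isomorphism makes $I$ a tensor functor; combined with the already-established equivalence of left $\C_0$-module categories, this upgrades $I$ to a tensor equivalence. The main obstacle is the bookkeeping required to verify that the projection-formula isomorphism respects the right $A$-module structure (not merely the underlying object of $\C_0$) and that the resulting monoidal structure on $I$ is coherent; both reduce, via commutativity of $A$, to checking that the two natural $A$-actions arising from different paths through the relevant diagrams coincide.
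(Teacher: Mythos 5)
Your argument is correct, but it takes a genuinely different route from the paper. The paper's proof is a one-line reduction: the tensor structure on the right adjoint $I$ of the central functor $F$ is constructed exactly as for the right adjoint of the forgetful functor $\Z(\E)\to\E$ in \cite[Lemma 8.12.2]{EGNO}, i.e., one directly equips $I$ with a (lax, then strong) monoidal structure coming from the adjunction and the commutativity of $A=I(\mathbf{1})$; this works for any central functor and uses nothing special about $\C_0$. You instead exploit a feature specific to this situation, namely that by Corollary \ref{proposition:projective covers of simple one dimensional objects of C generate a full tensor subcategory} the tensor embedding $\iota:\E\hookrightarrow\C_0$ splits $F$, so that $F\circ\iota\cong\Id_{\E}$ as tensor functors; combining this with the projection formula $I(F(X)\otimes Y)\cong X\otimes I(Y)$ you identify $I\cong F_A\circ\iota$ with $F_A$ the free module functor, and transport the known tensor structure of $F_A$ (which is monoidal since $A$ is commutative, Lemma \ref{A is commutative}). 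Your Yoneda computation and the specialization $I(Y)\cong\iota(Y)\otimes A$ are correct, and the two points you flag as bookkeeping ($A$-linearity of the projection-formula isomorphism and coherence of the transported structure) are indeed standard and unproblematic. What each approach buys: the paper's citation-style argument is shorter and applies verbatim in settings where $\E$ does not embed back into the source of $F$ (as in the general $\Z(\C)\to\C$ case), while yours is more explicit and self-contained, giving the concrete description that the equivalence sends $Y$ to the free module $\iota(Y)\otimes A$ (in particular every $A$-module in $\C_0$ is free), at the cost of relying on the supergroup description of $\C_0$ from Corollary \ref{proposition:projective covers of simple one dimensional objects of C generate a full tensor subcategory}.
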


\begin{proof}
One shows that ${\rm I}$ has a structure of a tensor functor in exactly the same way as one shows that the right adjoint to the forgetful functor $\Z(\E)\twoheadrightarrow \E$ does (see \cite[Lemma 8.12.2]{EGNO}).
\end{proof}

Since $\C_1$ is an exact invertible $\C_0$-bimodule category, the category $\Mod(A)_{\C_1}=\C_1\boxtimes _{\C_0} \Mod(A)_{\C_0}$ of right $A$-modules in $\C_1$ is naturally a left indecomposable $\C_0$-module category. 

\begin{lemma}\label{exact}
The following hold:
\begin{enumerate}
\item
There is an equivalence $\Mod(A)_{\C_1}\cong \Mod(A)_{\C_0}$ of left module categories over $\C_0$. Thus, $\Mod(A)_{\C_1}$ is exact over $\C_0$.

\item
$\Mod(A)_{\C}$ is an {\em exact} left indecomposable module category over $\C$. Hence, $\A:=\Mod(A)_{\C}$ is a finite tensor subcategory of $\Bimod_{\C}(A)$ (see \ref{Exact commutative algebras}).

\item
The tensor category $\A$ has a $\mathbb{Z}/2\mathbb{Z}$-faithful grading $\A=\A_0\oplus \A_1$, where $\A_0\cong \E$ as tensor categories and $\A_1\subset \Bimod_{\C_1}(A)$.
\end{enumerate}
\end{lemma}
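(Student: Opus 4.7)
My approach is to exploit the $\mathbb{Z}/2\mathbb{Z}$-grading $\C=\C_0\oplus\C_1$ to induce a grading on $\A=\Mod_{\C}(A)$, and then use invertibility of $\C_1$ as a $\C_0$-bimodule together with Frobenius--Perron dimension counting. Since $A\in\C_0$ and every right $A$-module $M\in\A$ has underlying $\C$-object decomposing uniquely as $M_0\oplus M_1$ with $M_i\in\C_i$ (and the $A$-action preserves this decomposition), we obtain $\A=\A_0\oplus\A_1$ with $\A_i:=\Mod_{\C_i}(A)$ a left $\C_0$-module category. By Lemma \ref{exact1}, $\A_0\cong\E$ as $\C_0$-modules; and since $\C_1$ is an invertible $\C_0$-bimodule, $\A_1=\C_1\boxtimes_{\C_0}\A_0$ is an exact indecomposable left $\C_0$-module category.

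To establish the equivalence $\A_1\cong\A_0$ in part (1), I would construct an explicit $\C_0$-module functor $\Phi:\A_0\to\A_1$ by $M\mapsto X\otimes M$ for a well-chosen $X\in\C_1$, using the braiding of $\C$ to endow $X\otimes M$ with its $A$-module structure and to make $\Phi$ into a $\C_0$-module functor via the natural isomorphisms $\Phi(Y\otimes M)=X\otimes Y\otimes M\xrightarrow{\cong}Y\otimes X\otimes M=Y\otimes\Phi(M)$ for $Y\in\C_0$. That $\Phi$ is an equivalence then follows from the Frobenius--Perron dimension match $\FPdim(\A_1)=\FPdim(\C)/(2\FPdim(A))=\FPdim(\E)=\FPdim(\A_0)$ together with the $\C_0$-indecomposability of both sides, once $X$ is chosen so that it generates $\C_1$ as a $\C_0$-module.

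For parts (2) and (3), exactness of $\A$ over $\C$ reduces to exactness of $A$ as an algebra in $\C$: for any projective $P=P_0\oplus P_1\in\C$ with $P_i\in\C_i$ projective, the projectivity of each $P_i\otimes M$ in $\A$ follows from the exactness of $\A_0$ and $\A_1$ over $\C_0$ (from Lemma \ref{exact1} and part (1)). Indecomposability of $\A$ over $\C$ is immediate: any $\C$-submodule of $\A$ splits compatibly with the grading into pieces of the $\C_0$-indecomposable $\A_i$'s, and the $\C_1$-action swaps the two components. The tensor structure of $\A$ as a subcategory of $\Bimod_{\C}(A)$ is supplied by Subsection \ref{Exact commutative algebras}, using commutativity of $A$ in $\C$ (inherited from Lemma \ref{A is commutative} since $A\in\C_0$). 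The grading $\A=\A_0\oplus\A_1$ is automatically a tensor grading because $\otimes_A$ descends from $\otimes$ in $\C$, and it is faithful because $X\otimes A\in\A_1$ for any nonzero $X\in\C_1$; Lemma \ref{exact1} identifies $\A_0\cong\E$ as tensor categories, while $\A_1\subseteq\Bimod_{\C_1}(A)$ holds by construction. The main obstacle I anticipate is part (1): carefully verifying naturality and $A$-bilinearity of $\Phi$ with respect to the braiding of $\C$, and ensuring the chosen $X$ makes $\Phi$ essentially surjective on the indecomposable module $\A_1$.
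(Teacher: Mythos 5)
Your overall skeleton for parts (2) and (3) — the grading $\A=\A_0\oplus\A_1$ induced by $A\in\C_0$, exactness of $\A_1$ over $\C_0$ from invertibility of $\C_1$, indecomposability over $\C$ because the $\C_1$-action interchanges the two $\C_0$-indecomposable components, and the tensor structure coming from commutativity of $A$ — is essentially the paper's argument; the only gloss is the passage from $\C_0$-exactness to $\C$-exactness, which the paper settles by citing \cite[Corollary 2.5]{EG} rather than asserting it.

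The genuine gap is in part (1). The paper does not build an explicit functor: it deduces $\Mod_{\C_1}(A)\cong\Mod_{\C_0}(A)$ from the identification $\Mod_{\C}(A)\cong\C\boxtimes_{\C_0}\Mod_{\C_0}(A)$ together with the indecomposability of $\Mod_{\C}(A)$ as a $\C$-module category. Your substitute — that $\Phi=X\otimes(-)$ is an equivalence because both sides are $\C_0$-indecomposable with matching Frobenius--Perron dimensions once $X$ generates $\C_1$ — does not hold up. First, a nonzero $\C_0$-module functor between indecomposable exact module categories need not be an equivalence even when the two categories are abstractly equivalent (already $(-)\otimes Y$ on $\E$ for a non-invertible $Y$ is a module endofunctor of an indecomposable module category that is not an equivalence), and ``equal $\FPdim$'' of module categories is neither defined in this setting nor a sufficient criterion. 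Second, and more seriously, $\Phi\cong F(X)\otimes_A(-)$ with $F(X)=X\otimes A\in\A_1$, and a functor of this form can be an equivalence only if $F(X)$ is an invertible object of the tensor category $\A$; no choice of $X$, generating or not, guarantees this. The obstruction is real in the non-integral situation: already for $W=0$ and $\C$ an Ising category one has $A={\bf 1}$, $\A=\C$, and $\A_1=\C_1$ has a single simple object while $\A_0=\sVec$ has two, so no functor of the form $X\otimes(-)$ (indeed no functor at all) is an equivalence $\A_0\to\A_1$; this shows the equivalence claim is delicate (it must be read with $W\ne 0$) and, in any case, cannot be obtained by generation-plus-dimension counting. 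To prove (1) you need the global input the paper uses — the $\C$-indecomposability of $\Mod_{\C}(A)$ and the induced-module decomposition over $\C_0$ — not a pointwise tensoring construction.
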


\begin{proof}
(1), (2) Clearly, $\Mod(A)_{\C}\cong\C\boxtimes _{\C_0} \Mod(A)_{\C_0}$ is the induced left module category of $\Mod(A)_{\C_0}$. Thus by Theorem \ref{pre main theorem}(1), we have $$\Mod(A)_{\C}\cong(\C_0\oplus \C_1)\boxtimes _{\C_0} \Mod(A)_{\C_0}\cong \Mod(A)_{\C_0}\oplus \Mod(A)_{\C_1},$$ as left $\C_0$-module categories. It follows that $\Mod(A)_{\C_1}$ and $\Mod(A)_{\C_0}$ must be equivalent as left module categories over $\C_0$ since $\Mod(A)_{\C}$ is indecomposable over $\C$. This implies that $\Mod(A)_{\C_1}$ is exact over $\C_0$, and hence so is $\Mod(A)_{\C}$. Therefore by \cite[Corollary 2.5]{EG}, $\Mod(A)_{\C}$ is exact over $\C$. Finally, since $A$ is commutative in $\C$, $\A$ is a finite tensor subcategory of $\Bimod_{\C}(A)$ (see \ref{Exact commutative algebras}).

(3) The decomposition $\Mod(A)_{\C}=\Mod(A)_{\C_0}\oplus \Mod(A)_{\C_1}$ of module categories over $\C_0$ obtained above clearly induces a $\mathbb{Z}/2\mathbb{Z}$-grading on $\A$ with the claimed properties.
\end{proof}

It follows from Proposition \ref{injectivefun} and Lemma \ref{exact}(2) that we have a decomposition 
\begin{equation}\label{decomposition}
\C\boxtimes \C'\cong \Z(\A)
\end{equation}
of braided tensor categories, where $\C'$ is the centralizer of $\C$ inside $\Z(\A)$. Since by (\ref{decomposition}), $\C'$ is non-degenerate with Frobenius-Perron dimension $4$, $\C'$ is fusion. We have thus obtained the following.

\begin{proposition}\label{centralizer of C}
There exists an element $\B$ in the group ${\bf B}$ such that $\C'\cong\B$ as braided tensor categories. Hence, there is a braided tensor equivalence $\C\boxtimes \B\cong \Z(\A)$. \qed
\end{proposition}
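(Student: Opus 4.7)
The strategy is to identify $\C'$ as an element of the set ${\bf B}$ by verifying the defining properties: non-degeneracy, fusion, Frobenius-Perron dimension $4$, and containing $\sVec$ as a Lagrangian subcategory.

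First I would compute $\FPdim(\C')$. Since $\E$ is Lagrangian in $\C$, Equation (\ref{shimizu}) yields $\FPdim(\C)=\FPdim(\E)^2$. Using $\FPdim(A)=\FPdim(\E)/2$ (recorded just before Lemma \ref{A is commutative}) together with Lemma \ref{dimmodca}, we obtain $\FPdim(\A)=2\FPdim(\E)$, so $\FPdim(\Z(\A))=\FPdim(\A)^2=4\FPdim(\C)$, and the decomposition (\ref{decomposition}) forces $\FPdim(\C')=4$. Combined with the non-degeneracy of $\C'$ (Theorem \ref{injectivefun}(2)) and the fusion property asserted in the paragraph preceding the statement, this shows that $\C'$ is a non-degenerate braided fusion category of Frobenius-Perron dimension $4$.

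To conclude $\C'\in{\bf B}$, it remains to exhibit $\sVec$ as a Lagrangian subcategory of $\C'$. By the classification of non-degenerate braided fusion categories of Frobenius-Perron dimension $4$ recalled at the beginning of Section \ref{MissVec}, every such category contains $\sVec$ as a subcategory: for the pointed ones (underlying group $\mathbb{Z}/4\mathbb{Z}$ or $(\mathbb{Z}/2\mathbb{Z})^2$), non-degeneracy of the associated quadratic form $q$ forces at least one order-$2$ element $a$ with $q(a)=-1$, whence $\la \mathbf{1},a\ra\cong\sVec$; for the non-pointed Ising case, $\sVec$ arises as the fermionic subcategory. Any such $\sVec\subseteq\C'$ is automatically Lagrangian: by Equation (\ref{shimizu}) together with the non-degeneracy of $\C'$, $\FPdim(\sVec)\cdot\FPdim(\sVec')=\FPdim(\C')=4$, giving $\FPdim(\sVec')=2=\FPdim(\sVec)$, hence $\sVec=\sVec'$.

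Therefore $\C'\cong\B$ for some $\B\in{\bf B}$, and the equivalence $\C\boxtimes\B\cong\Z(\A)$ is immediate from (\ref{decomposition}). The most subtle step is the appeal to the classification of non-degenerate braided fusion categories of Frobenius-Perron dimension $4$, whose verification relies on the full case analysis of the $16$ such categories given in \cite{DGNO,DNO}.
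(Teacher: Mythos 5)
Your opening computation is fine and is exactly the paper's route: $\FPdim(\C)=\FPdim(\E)^2$, $\FPdim(A)=\FPdim(\E)/2$, Lemma \ref{dimmodca} giving $\FPdim(\A)=2\FPdim(\E)$, hence $\FPdim(\Z(\A))=4\FPdim(\C)$ and $\FPdim(\C')=4$, with non-degeneracy of $\C'$ coming from Theorem \ref{injectivefun}(2). The problem is the step you add to get membership in ${\bf B}$. It is simply not true that every non-degenerate braided fusion category of Frobenius--Perron dimension $4$ contains $\sVec$, and in the pointed case non-degeneracy of the quadratic form does \emph{not} force an order-$2$ element with $q(a)=-1$. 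A counterexample is the Deligne product of the semion category with its reverse (equivalently $\Z(\Vect^{\omega}_{\mathbb{Z}/2\mathbb{Z}})$ with $\omega$ nontrivial): it is pointed on $(\mathbb{Z}/2\mathbb{Z})^2$ with $q$-values $i,-i,1$ on the nontrivial objects, it is non-degenerate of dimension $4$, its only Lagrangian subcategory generated by an invertible object is Tannakian $\Rep(\mathbb{Z}/2\mathbb{Z})$, and it contains no fermion at all. In particular ${\bf B}$ is a \emph{proper} subset of the non-degenerate braided fusion categories of dimension $4$ (there are more than $16$ of the latter), so the appeal to the classification recalled at the start of Section \ref{MissVec} cannot yield "$\sVec\subseteq\C'$"; that recollection only enumerates the categories which already contain $\sVec$ as a Lagrangian subcategory. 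The final observation, that any copy of $\sVec$ in $\C'$ would automatically be Lagrangian by Equation (\ref{shimizu}), is correct, but the existence of such a copy is exactly what is missing.

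So there is a genuine gap: "non-degenerate, fusion, $\FPdim=4$" does not by itself place $\C'$ in ${\bf B}$; one must use how $\C'$ sits inside $\Z(\A)$, not just its abstract invariants. For instance, one should locate an order-$2$ invertible object of $\C'$ with self-braiding $-\id$ by exploiting the $\mathbb{Z}/2\mathbb{Z}$-grading of $\A$ with trivial component $\E$ (Lemma \ref{exact}(3)) and the Tannakian subcategory $\Rep(\mathbb{Z}/2\mathbb{Z})\subset\Z(\A)$ provided by Theorem \ref{gnn}, tracking its image under the identification $\Z(\A)\cong\C\boxtimes\C'$ of (\ref{decomposition}) and using that the invertibles of $\C$ are $\mathbf{1}$ and $S$ with $S$ a fermion. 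To be fair, the paper's own write-up is also terse at this exact point (it passes from "non-degenerate of dimension $4$, hence fusion" directly to $\C'\in{\bf B}$), so your instinct that an extra verification is needed was sound; but the argument you supply for it is false as stated and would equally "prove" that the double semion lies in ${\bf B}$, which it does not.
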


Finally, it follows from Theorem \ref{gnn} and Proposition  \ref{centralizer of C} that there is a braided tensor equivalence $\C\boxtimes \B\cong (\Z_{\E}(\A))^{\mathbb{Z}/2\mathbb{Z}}$. In particular $\C\boxtimes \B$ contains a Tannakian subcategory $\mathcal{T}:=\Rep(\mathbb{Z}/2\mathbb{Z})$, and there is a braided tensor equivalence $(\mathcal{T}')_{\mathbb{Z}/2\mathbb{Z}}\cong \Z(\E)$. 
Hence $\C\widetilde{\boxtimes} \B\cong \Z(\E)$, so $\C\cong \Z(\E)\widetilde{\boxtimes} \B ^{-1}$, as desired.

The proof of Theorem \ref{main theorem} is complete. \qed

As a corollary of Theorems \ref{main theorem}, \ref{simproj} we obtain the following result.

\begin{corollary}\label{simprojex} 
Let $\C$ be finite non-degenerate braided tensor category containing a Lagrangian subcategory $\E_n$. Then the following hold:
\begin{enumerate}
\item
If $\C$ is integral then $\C$ has exactly four nonisomorphic  simple objects: two invertible objects (non-projective, if $n>0$), and two simple projective objects (non-invertible, if $n>0$).

\item
If $\C$ is not integral then $\C$ has exactly three nonisomorphic simple objects: two invertible objects (non-projective, if $n>0$), and one simple projective object. \qed
\end{enumerate}
\end{corollary}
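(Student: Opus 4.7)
The plan is to derive Corollary \ref{simprojex} as an immediate consequence of Theorems \ref{main theorem} and \ref{simproj}. By Theorem \ref{main theorem}(1), any such $\C$ is braided tensor equivalent to $\Z(\E)\widetilde{\boxtimes}\B$ for some $\B\in{\bf B}$, and by Theorem \ref{simproj} the category $\C$ is integral if and only if $\B$ is pointed. Thus I only need to read off, from the structural results already obtained, the complete list of simple objects of $\C$ in each of the two cases.

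First, I would handle the simples lying in the identity component $\C_0$ of the $\mathbb{Z}/2\mathbb{Z}$-grading of Theorem \ref{pre main theorem}: Proposition \ref{nosimple} says that these are exactly the two invertible objects $\textbf{1}$ and $S$, and the identification $\C_0\cong \sRep(W\oplus W^*)$ from Corollary \ref{proposition:projective covers of simple one dimensional objects of C generate a full tensor subcategory} shows that they are non-projective precisely when $W\ne 0$, since the unit of $\sRep(V)$ is projective only when $V=0$.

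Next I would read off the simples in $\C_1$ from Theorem \ref{simproj}. In the integral case, Theorem \ref{simproj}(2) provides exactly two simple projectives $P$ and $Q=P\otimes S$ with $\FPdim(P)=\FPdim(\E)/2=2^{\dim W}$; this equals $1$ iff $W=0$, so $P$ and $Q$ are non-invertible precisely when $W\ne 0$. In the non-integral case, Theorem \ref{simproj}(1) provides a single simple projective $X$ with $\FPdim(X)=\FPdim(\E)/\sqrt{2}=2^{\dim W + 1/2}$, which is irrational and in particular strictly greater than $1$, so $X$ is always non-invertible.

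These objects exhaust the simples of $\C_1$ because Proposition \ref{proposition:C braided non-degenerate has four simple objects} bounds the number of nonisomorphic simples of $\C_1$ by two, and Theorem \ref{simproj} already supplies that many (respectively, one) in the pointed (respectively, non-pointed) case. Combining the $\C_0$ and $\C_1$ lists yields the claim. There is no genuine obstacle here; the only bookkeeping is the matching of the dichotomies ``$\C$ integral versus non-integral'' and ``$\B$ pointed versus non-pointed'', which is exactly what Theorem \ref{simproj} provides.
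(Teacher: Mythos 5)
Your proposal is correct and follows essentially the same route as the paper, which obtains the corollary directly by combining Theorem \ref{main theorem}(1) (every such $\C$ is $\Z(\E)\widetilde{\boxtimes}\B$ for some $\B\in{\bf B}$) with the explicit description of simples in Theorem \ref{simproj}; your extra bookkeeping via Proposition \ref{nosimple}, Corollary \ref{proposition:projective covers of simple one dimensional objects of C generate a full tensor subcategory} and Proposition \ref{proposition:C braided non-degenerate has four simple objects} just makes explicit what the paper leaves implicit. The only slight imprecision is your exhaustion argument for $\C_1$ in the non-integral case (the bound of two simples plus exhibiting one does not by itself exclude a second), but this is already covered by the ``exactly'' in Theorem \ref{simproj}(1), on which the paper itself relies.
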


\begin{remark}\label{ribbon}
\begin{enumerate}
\item
Some (but not all) of the finite non-degenerate braided {\em integral} tensor categories containing a Lagrangian subcategory $\E_n$ can be constructed using the interesting method developed by Davydov-Runkel in \cite{DR1,DR}. For example, $\Z(\E_n)$ can be constructed using Davydov-Runkel's method if and only if $n$ is even (see, \cite[Theorem 1.2]{DR2}).

\item
In \cite[Section 5.1]{DR1} it is shown that all $8$ equivalence classes of non-degenerate braided {\em non-integral} tensor categories containing a Lagrangian subcategory $\E_0=\sVec$ (i.e., Ising categories; see \ref{MissVec}) can be constructed using the method of Davydov-Runkel. 

In the Appendix below we use Theorem \ref{main theorem}, and the classification of $R$-matrices of the $8$-dimensional Nicols' Hopf algebra $E(2)$ given in \cite{G}, to verify that all $8$ equivalence classes of non-degenerate braided {\em non-integral} tensor categories containing a Lagrangian subcategory $\E_1$ arise in this way as well.

More generally, we expect that Theorem \ref{main theorem}, and the classification of $R$-matrices of the $2n$-dimensional Nicols' Hopf algebra $E(2n)$ given in \cite{PO}, can be used in a similar way to verify that all $8$ equivalence classes of non-degenerate braided {\em non-integral} tensor categories containing a Lagrangian subcategory $\E_n$ arise in this way for every $n$.
\end{enumerate}
\end{remark}
  
\section{Degenerate braided tensor categories with Lagrangian $\sRep(W)$}
\label{section:proof of case (2)}

Let $W$ be an $n$-dimensional vector space viewed as a purely odd supervector space. Let $\E_n=\sRep(W)$, and let $S$ be the unique nontrivial invertible object of $\E_n$.

\begin{theorem}
\label{pre main theorem1}
The following hold:
\begin{enumerate}
\item 
Assume $\mathcal{D}$ is a finite {\em degenerate} braided tensor category containing $\E_n$ as a Lagrangian subcategory. Then 
there exists a vector space $V$, with $n\le{\dim(V)}\le 2n$, such that $\mathcal{D}\cong \sRep(V)$ as tensor categories, and we have $\FPdim(\D')=2^{2n-\dim(V)+1}$. 

\item For every integer $\ell$ such that $n\le \ell\le 2n$, there exist a $\ell$-dimensional vector space $V$ and a finite {\em degenerate} braided tensor category $\mathcal{D}$ containing $\E_n$ as a Lagrangian subcategory, such that $\FPdim(\D')=2^{2n-\ell+1}$.
\end{enumerate} 
\end{theorem}

\begin{proof} 
(1) We have $\Vect\ne \mathcal{D}'\subset \mathcal{E}_n'=\mathcal{E}_n$, hence $S\in\mathcal{D}'$. Arguing now as in the proof of Proposition \ref{nosimple}, we conclude that ${\bf 1},S$ are the only simple objects of $\mathcal{D}$. Therefore it follows from \cite[Theorem 3.1]{EG} that $\mathcal{D}$ is tensor equivalent to $\sRep(V)$ for some finite dimensional purely odd supervector space $V$.

Since $\sRep(W)\subset{\sRep(V)}$ as tensor categories, $n\le{\dim(V)}$. Moreover, by (\ref{shimizu}),
\begin{equation*}
\FPdim(\mathcal{D}){\FPdim(\mathcal{D}')}=\FPdim(\mathcal{E}_n){\FPdim(\mathcal{E}_n')}=\FPdim(\mathcal{E}_n)^2.
\end{equation*}
Thus, 
$$2^{\dim(V)+1}=\FPdim(\mathcal{D})\le{\FPdim(\mathcal{E}_n})^2=2^{2(n+1)},$$ 
which implies that $\dim(V)\le 2n$.

(2) By Theorem \ref{thecenterisnondeg}, $\E_n$ is Lagrangian in $\Z(\E_n)$. 
Clearly, $\E_n$ is Lagrangian in every braided tensor subcategory $\D$ of $\Z(\E_n)$ containing it. Now it is clear that for every $n\le \ell\le 2n$, there exists a braided tensor subcategory $\D$ of $\Z(\E_n)$ of Frobenius-Perron dimension $2^{\ell+1}$ containing $\E_n$ as a Lagrangian subcategory. Since by (\ref{shimizu}), $\FPdim(\mathcal{D}){\FPdim(\mathcal{D}'\cap{\mathcal{E}_n})}=\FPdim(\mathcal{E}_n)^2$, we see that $\D$ is degenerate and $\FPdim(\D')=2^{2n-\ell+1}$, as desired.
\end{proof}

\section{Appendix: Dimension $16$}\label{appendix}

Let $E(2)$ be the $8$-dimensional Nichols Hopf algebra, with grouplike element $u$ and $(1,u)$-skew-primitive elements $x,y$ (see \ref{section:Rep(W)}). It is well known (and straightforward to check) that the dual Hopf algebra $E(2)^*$ is unimodular (i.e., $1$ is the distinguished grouplike element of $E(2)$), and that every integral of $E(2)^*$ is of the form 
$$\lambda_s:=s(xy)^*+s(uxy)^*,\,\,\,s\in\mathbb{C}.$$

Let $R_0$ be as in (\ref{r0}), and consider the $R$-matrix  
\begin{eqnarray*}\label{rmatrix}
\lefteqn{R:=R_0- 
\frac{1}{2}\left(x\otimes uy + ux\otimes uy + x\otimes y - ux\otimes y \right)}\\
& + & \frac{1}{2}\left(y\otimes ux + uy\otimes ux + y\otimes x - uy\otimes x \right)\\
& - & \left(xy\otimes xy + uxy\otimes xy + xy\otimes uxy - uxy\otimes uxy \right).
\end{eqnarray*}
Clearly $R$ is non-degenerate. It is well known \cite{G} that $(E(2),R)$ is a quasitriangular Hopf algebra with Drinfeld element ${\bf u}:=u(1+2xy)$, and that the finite braided tensor category $\Rep(E(2),R)$ is slightly-degenerate (so, $\Rep(E(2),R)\cong \D_1$ (\ref{dn}) as braided tensor categories).

Let
\begin{eqnarray*}\label{gamma1}
\lefteqn{\gamma:=R_0+ 
\frac{i}{2}\left(x\otimes uy - ux\otimes uy + x\otimes y + ux\otimes y \right)}\\
& - & \frac{i}{2}\left(y\otimes ux - uy\otimes ux + y\otimes x + uy\otimes x \right)\\
& - & \frac{1}{2}\left(xy\otimes xy + uxy\otimes xy + xy\otimes uxy - uxy\otimes uxy \right).
\end{eqnarray*}
The following can be verified in a straightforward manner.
 
\begin{lemma}\label{claim1}
The $2$ triples $(\gamma,\lambda_{\pm i},g=1)$ satisfy all the conditions in \cite[Theorem 1]{DR}. Namely, 
we have
\begin{enumerate}
\item
$\gamma$ is non-degenerate.

\item
$(\id\otimes \varepsilon)(\gamma)=1=(\varepsilon\otimes \id)(\gamma)$.

\item
$(\id\otimes S)(\gamma)=(S\otimes \id)(\gamma)$. 

\item
$(\Delta\otimes \id)(\gamma)=\gamma_{13}\gamma_{23}$ and $(\id\otimes \Delta)(\gamma)=\gamma_{12}\gamma_{13}$.

\item
$(\lambda_{\pm i}\otimes \lambda_{\pm i})((\id\otimes S)(\gamma))=1$.

\item
$(\id\otimes S^2)(\gamma)=\gamma_{21}$. \qed
\end{enumerate}
\end{lemma}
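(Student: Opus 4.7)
The plan is to verify each of the six properties by direct computation in the explicit basis $\{1, u, x, ux, y, uy, xy, uxy\}$ of $H(2)$, using the Hopf algebra structure ($\Delta(u)=u\otimes u$, $\Delta(x)=1\otimes x+x\otimes u$, $\Delta(y)=1\otimes y+y\otimes u$, $\varepsilon(u)=1$, $\varepsilon(x)=\varepsilon(y)=0$, $S(u)=u$, $S(x)=-xu$, $S(y)=-yu$, together with $u^2=1$, $x^2=y^2=0$, $xy=-yx$, $ux=-xu$, $uy=-yu$). Nothing conceptual is involved; the content of the lemma is that the author has guessed $\gamma$ correctly so that these identities hold. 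The proof is therefore a tabulation.

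For property (1), I would write $\gamma$ as a $16\times 16$ matrix in the basis $\{e_i\otimes e_j\}$ of $H\otimes H$ (most entries are zero) and confirm that its determinant is nonzero; alternatively, one can display $\gamma$ as a sum over the $\mathbb{Z}/2\mathbb{Z}$-grading blocks and observe that each block (in particular the $R_u$ block and the $xy$-block) is invertible, so $\gamma$ is. For (2), $(\varepsilon\otimes\id)$ annihilates every term of $\gamma$ that contains $x$, $y$, $xy$ or $uxy$ in the left tensorand, leaving only $(\varepsilon\otimes\id)(R_u)=1$; symmetrically on the right. For (3), use that $S$ sends $x\mapsto -xu$, $y\mapsto -yu$ and $xy\mapsto S(y)S(x)=xy$, $uxy\mapsto uxy$; applying $S\otimes\id$ and $\id\otimes S$ term by term to $\gamma$ gives the same element after simplification. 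For (5), $\lambda_{\pm i}$ vanishes on every basis vector except $xy$ and $uxy$, so $(\lambda_{\pm i}\otimes\lambda_{\pm i})((\id\otimes S)(\gamma))$ reduces to the contribution of the last line of $\gamma$, and one checks that the resulting expression in $\pm i$ squares to $1$. For (6), use $S^2(u)=u$, $S^2(x)=-x$, $S^2(y)=-y$, $S^2(xy)=xy$; applying $\id\otimes S^2$ flips the signs of the $x\otimes(\cdot)$ and $y\otimes(\cdot)$ terms exactly so that $\gamma$ becomes $\gamma_{21}$ (comparing with the explicit formula, this matches the sign pattern in the $i$-terms and in the last line).

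The main obstacle is property (4), the two pentagon-type identities $(\Delta\otimes\id)(\gamma)=\gamma_{13}\gamma_{23}$ and $(\id\otimes\Delta)(\gamma)=\gamma_{12}\gamma_{13}$. These involve expanding the product of two elements of $H^{\otimes 3}$, each of which is a sum of roughly sixteen terms once one substitutes $\Delta(x)=1\otimes x+x\otimes u$ etc., and collecting using the relations $x^2=y^2=0$, $xy=-yx$, $ux=-xu$, $uy=-yu$, $u^2=1$. The most efficient way I would carry this out is to organize $\gamma$ by the bidegree $(\deg_1,\deg_2)\in\{0,1,2\}^2$ of its two tensor factors in the natural $\mathbb{Z}_{\ge 0}$-grading of $\wedge W$, and verify (4) one bidegree at a time: the $(0,0)$ block is exactly the group-like $R_u$ case (already known for $\mathbb{Z}/2\mathbb{Z}$); the mixed $(1,0)$ and $(0,1)$ blocks recover the $(x,y)$ and $(y,x)$ terms and use that $\Delta$ respects skew-primitivity; the $(1,1)$ and higher blocks require the cross terms $\gamma_{12}\gamma_{13}$ to produce the $xy$-terms on the right side, which is where the coefficients $-1$ in front of the $xy\otimes xy$ block in $\gamma$ (instead of the $-2$ appearing in $R$) are forced. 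Once the bookkeeping is set up, each bidegree yields a finite list of coefficient identities that close out routinely.
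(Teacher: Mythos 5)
Your proposal is exactly the paper's approach: the paper offers no argument beyond asserting that the six identities "can be verified in a straightforward manner" by direct computation with the explicit formula for $\gamma$ and the Hopf structure of $H(2)$, which is what you carry out (your organization by bidegree for condition (4) is a sensible way to do the bookkeeping). One small imprecision: in (5) the required value is that the pairing equals $1$ exactly --- only the $xy$/$uxy$ block of $\gamma$ contributes, giving $-\lambda_{\pm i}(xy)^2=-(\pm i)^2=1$ --- rather than merely "squaring to $1$", but this is a phrasing slip in a trivially checkable step, not a gap.
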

\noindent (In the language of \cite[Theorem 1]{DR}, (1)-(4) say that $\gamma$ is a non-degenerate Hopf-copairing.)

Set $a:=(1-i)/2$ and $\zeta:=e^{\pi i/4}$. Let
\begin{equation*}\label{sigma}
\sigma_+:=(a1+\bar{a}u)(1+xy),\,\,\,\sigma_-:=u\sigma_+=\sigma_+ u \in E(2).
\end{equation*}
It is easy to check that $\sigma_+,\sigma_-$ are invertible with inverses
$$\sigma_+^{-1}=(\bar{a}1+au)(1-xy),\,\,\,\sigma_-^{-1}=u\sigma_+^{-1}=\sigma_+^{-1}u,$$
and $\sigma_+^2=\sigma_-^2={\bf u}$ (hence $S^2(h)=\sigma_+^2h\sigma_+^{-2}$ for every $h\in E(2)$). 

Now using the above properties of $\sigma_+,\sigma_-$ and the properties of the $R$-matrix $R$ it is straightforward to verify the following.
 
\begin{lemma}\label{claim2}
Each one of the following $8$ sextuplets $$(R,\sigma_{\pm},g=1,\lambda_{i},\gamma,\beta=\pm\zeta)\,\,\,\text{and}\,\,\,(R,\sigma_{\pm},g=1,\lambda_{-i},\gamma,\beta=\pm i\zeta)$$ satisfies all the conditions in \cite[Theorem 3]{DR}. Namely, let $\sigma=\sigma_{\pm}$,  then we have
\begin{enumerate}
\item
$\gamma=(\sigma^{-1}\otimes 1)\Delta(\sigma)(1\otimes\sigma^{-1})$.

\item
$\lambda_{\pm i}(S(h))=\lambda_{\pm i}(\sigma h\sigma^{-1})$ for every $h\in E(2)$.

\item
$\lambda_{i}(\sigma)=\beta^2$ and $\lambda_{- i}(\sigma)=\beta^2$.

\item
The map $E(2)\to E(2)^{{\rm cop}}$, $h\mapsto \sigma h \sigma^{-1}$, is a Hopf algebra isomorphism. 

\item
$S(\sigma)=\sigma$. 

\item
$\gamma=\sum \sigma\gamma_{1}\sigma^{-1}\otimes \sigma^{-1}S(\gamma_{2})\sigma$, where $\gamma=\sum \gamma_{1}\otimes \gamma_{2}$. \qed
\end{enumerate}
\end{lemma}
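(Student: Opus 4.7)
The lemma is a purely computational verification in the concrete $8$-dimensional Nichols Hopf algebra $H=H(2)$, which I would carry out in the basis $\{1,u,x,y,xy,ux,uy,uxy\}$. The background identities I will use freely are $u^2=1$, $x^2=y^2=0$, $xy=-yx$, $ux=-xu$, $uy=-yu$ (so $u$ commutes with $xy$); $\Delta(x)=1\otimes x+x\otimes u$ and $\Delta(y)=1\otimes y+y\otimes u$; $S(u)=u$, $S(x)=-xu=ux$, $S(y)=-yu=uy$, and hence $S(xy)=xy$ and $S(uxy)=uxy$; together with the scalar identities $a+\bar a=1$, $a^2=-i/2$, $\bar a^2=i/2$, $\zeta^2=i$. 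Since $\sigma_-=u\sigma_+=\sigma_+u$, each identity for $\sigma=\sigma_-$ reduces to the corresponding identity for $\sigma=\sigma_+$ by left- or right-multiplication by $u$, so I restrict attention throughout to $\sigma_+$.

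Conditions (3) and (5) are immediate. For (5), the identities above give $S(a+\bar a u)=a+\bar a u$ and $S(1+xy)=1+xy$, hence $S(\sigma_+)=(1+xy)(a+\bar a u)=\sigma_+$ (using that $u$ commutes with $xy$). For (3), reading off the coefficients of $xy$ and $uxy$ in $\sigma_+$ yields $\lambda_s(\sigma_+)=s(a+\bar a)=s$, so $\lambda_{\pm i}(\sigma_+)=\pm i$; this matches $\beta^2$ for the four assignments $\beta=\pm\zeta$ (giving $\beta^2=i$) and $\beta=\pm i\zeta$ (giving $\beta^2=-i$).

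The heart of the proof is condition (1). My plan is to compute
\[
\Delta(1+xy)=1\otimes 1+1\otimes xy+xy\otimes 1+x\otimes uy-y\otimes ux
\]
and $\Delta(a+\bar a u)=a\,(1\otimes 1)+\bar a\,(u\otimes u)$, multiply to obtain $\Delta(\sigma_+)$, and then conjugate by $(\sigma_+^{-1}\otimes 1)$ on the left and $(1\otimes\sigma_+^{-1})$ on the right using $\sigma_+^{-1}=(\bar a+au)(1-xy)$. Organizing the result by the number of odd generators $x,y$ appearing in each monomial, the degree-zero part should yield exactly $R_u=\frac{1}{2}(1\otimes 1+1\otimes u+u\otimes 1-u\otimes u)$; the degree-one part should yield the two $\pm\frac{i}{2}$-blocks of $\gamma$ involving one of $x,ux,y,uy$ on each tensor side; and the degree-two part should yield the $-\frac{1}{2}$-block involving $xy,uxy$. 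Once (1) is established, (6) follows by substituting the resulting formula $\Delta(\sigma)=(\sigma\otimes 1)\gamma(1\otimes\sigma)$ into its right-hand side and simplifying using $S(\sigma)=\sigma$ together with $\sigma^2={\bf u}$ (so that conjugation by $\sigma^2$ implements $S^2$).

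Condition (4) suffices to verify on the algebra generators $u,x,y$; a one-line conjugation gives $\sigma_+ u\sigma_+^{-1}=u$, and a direct expansion using $xyx=0$ together with $a\bar a=1/2$ and $a^2-\bar a^2=-i$ gives $\sigma_+ x\sigma_+^{-1}=iux$ (and analogously $\sigma_+ y\sigma_+^{-1}=iuy$), from which the coalgebra-antimorphism identity $\Delta^{op}(\sigma_+ h\sigma_+^{-1})=(\sigma_+\otimes\sigma_+)\Delta(h)(\sigma_+^{-1}\otimes\sigma_+^{-1})$ is a short verification on each generator. Condition (2) is linear in $h$, so I would check it on the basis: both sides vanish outside $h\in\{xy,uxy\}$ since $\lambda_{\pm i}$ is supported on $(xy)^*+(uxy)^*$, and the remaining two cases reduce to $S(xy)=xy$ and $S(uxy)=uxy$ together with the elementary identities $\sigma_\pm\,xy\,\sigma_\pm^{-1}=xy$ and $\sigma_\pm\,uxy\,\sigma_\pm^{-1}=uxy$ (which themselves follow from $(xy)^2=0$ and the fact that $a^2+\bar a^2=0$). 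The main obstacle is the bookkeeping in (1): $\Delta(\sigma_+)$ expands into roughly sixteen monomials, and conjugation by $\sigma_+^{-1}\otimes\sigma_+^{-1}$ introduces further sign flips coming from the anticommutation of $u$ with $x,y$ and from the $1-xy$ factors in $\sigma_+^{-1}$; grouping terms by the odd-generator filtration as above seems to be the cleanest way to recognize the three structural blocks of $\gamma$ in the final answer.
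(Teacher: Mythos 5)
Your overall plan --- brute-force verification in the monomial basis of $H(2)$ --- is the same as the paper's (its proof is a one-line ``straightforward to verify''), and your treatment of the $\sigma_+$ cases is sound: with your coproduct convention the odd part of $(\sigma_+^{-1}\otimes 1)\Delta(\sigma_+)(1\otimes\sigma_+^{-1})$ does come out to the $\pm\frac{i}{2}$-blocks of $\gamma$, and your checks of (2)--(5), including $\sigma_+x\sigma_+^{-1}=iux$ and $\sigma_\pm xy\sigma_\pm^{-1}=xy$, are correct. The genuine gap is the opening claim that every identity for $\sigma_-$ ``reduces to the corresponding identity for $\sigma_+$ by left- or right-multiplication by $u$''. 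Since $\sigma_-=u\sigma_+$, conjugation by $\sigma_-$ is $\mathrm{Ad}(u)\circ\mathrm{Ad}(\sigma_+)$, and $\mathrm{Ad}(u)$ is the nontrivial parity automorphism, so the reduction must be justified condition by condition. It does go through for (2) (because $\lambda_{\pm i}$ vanishes on odd elements), for (3) and (5) (direct), for (4) (because $\mathrm{Ad}(u)$ is a Hopf automorphism), and for (6) (because every term of $\gamma$ has both legs of equal parity, so conjugating both legs by $u$ fixes $\gamma$) --- but these are exactly the observations your blanket statement omits.

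For condition (1) the reduction fails outright. From $\Delta(\sigma_-)=(u\otimes u)\Delta(\sigma_+)$ and $\sigma_-^{-1}=u\sigma_+^{-1}=\sigma_+^{-1}u$ one gets $(\sigma_-^{-1}\otimes1)\Delta(\sigma_-)(1\otimes\sigma_-^{-1})=(\sigma_+^{-1}\otimes1)\bigl[(1\otimes u)\Delta(\sigma_+)(1\otimes u)\bigr](1\otimes\sigma_+^{-1})$, and conjugating by $1\otimes u$ negates precisely the four terms of $\Delta(\sigma_+)$ whose second leg is odd (the $x\otimes uy$, $y\otimes ux$, $ux\otimes y$, $uy\otimes x$ terms). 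Since $\sigma_+^{-1}$ is even, the odd--odd block of the $\sigma_-$ expression is the \emph{negative} of that of the $\sigma_+$ expression, i.e.\ the negative of the nonzero $\frac{i}{2}$-part of $\gamma$. So the $\sigma_-$ case of (1) is not a formal consequence of the $\sigma_+$ case, and with a fixed coproduct convention a naive computation produces a sign discrepancy there; note also that switching between the two standard conventions for the $(1,u)$-skew-primitives simply interchanges which of $\sigma_\pm$ matches the stated $\gamma$. You must either carry out the $\sigma_-$ verification of (1) against the exact normalizations of condition (1) in Davydov--Runkel's Theorem 3 (where the datum $g$, or replacing the sextuplet by its conjugate under the parity automorphism, may absorb the sign), or explicitly account for this sign; as written, the proposal asserts a reduction that your own computation scheme contradicts.
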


In conclusion, Lemmas \ref{claim1} and \ref{claim2} establish that each one of the following $8$ sextuplets $$(R,\sigma_{\pm},g=1,\lambda_{i},\gamma,\beta=\pm\zeta)\,\,\,\text{and}\,\,\,(R,\sigma_{\pm},g=1,\lambda_{-i},\gamma,\beta=\pm i\zeta)$$ determines a structure of a finite braided tensor category on the category $\Rep(E(2))+\Vect$, in the manner prescribed by Davydov-Runkel in \cite{DR1,DR}. Clearly, these categories are {\em not integral} and have Frobenius-Perron dimension $16$, and it is straightforward to verify that they are non-degenerate and contain $\sRep(\mathbb{C})$ as a Lagrangian subcategory. 

\begin{lemma}\label{claim3}
The $8$ finite braided tensor categories constructed above are pairwise non-equivalent as braided tensor categories. 
\end{lemma}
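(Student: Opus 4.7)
The plan is to combine our $8$ explicit constructions with Theorem \ref{main theorem}(2), which asserts that there are precisely $8$ equivalence classes of finite non-degenerate braided non-integral tensor categories containing $\sRep(\mathbb{C})$ as a Lagrangian subcategory. Thus, once we show our $8$ categories are pairwise non-equivalent as braided tensor categories, they must automatically represent all $8$ equivalence classes, which is exactly the conclusion of Lemma \ref{claim3}.

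To distinguish the $8$ categories I would compute a suitable braided-equivalence invariant for each, and then verify that the $8$ values are pairwise distinct. A natural candidate is the twist $\theta_X$ on the unique simple non-invertible projective object $X\in\C_1$ whose existence and uniqueness are guaranteed by Theorem \ref{simproj}(1). Under the Davydov--Runkel prescription in \cite{DR1,DR}, the twist on the new simple object generating $\C_1$ is controlled directly by the scalar $\beta$ in the input sextuplet, with $\sigma_\pm$ governing how the ribbon structure extends across the $\mathbb{Z}/2\mathbb{Z}$-grading and $\lambda_{\pm i}$ entering through the integral. A slightly more intrinsic alternative is the multiplicative central charge (Gauss sum) $\tau(\C):=\FPdim(\C)^{-1/2}\sum_Y \theta_Y\FPdim(Y)^2$: under the isomorphism ${\bf B}\cong\mathbb{Z}/16\mathbb{Z}$, Gauss sums of the Ising generators are precisely the primitive $16$th roots of unity, so $\tau$ is a complete invariant on the set of $16$ categories under study.

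The main obstacle will be to extract these invariants explicitly from the six pieces of data $(R,\sigma,g=1,\lambda,\gamma,\beta)$, via the formulas of \cite{DR1,DR}, in a form that separates all eight sextuplets. Once one writes $\theta_X$ (or $\tau(\C)$) as an explicit function of $(\sigma_{\pm},\lambda_{\pm i},\beta)$, the $8$ values should parse as $8$ distinct $16$th roots of unity (e.g. the $4$ choices of $\beta\in\{\pm\zeta,\pm i\zeta\}$ get multiplied by a sign coming from $\sigma_\pm$ which toggles the twist on the odd component $\C_1$, and the sign of the integral $\lambda_{\pm i}$ enters through the normalization of the adjoined simple object). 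Pairwise non-equivalence follows immediately from the fact that an equivalence of braided tensor categories must preserve the twist on the unique simple projective non-invertible object (up to the automorphism group of the identity functor, which here is too small to account for the differences); combining with Theorem \ref{main theorem}(1) then gives the claim.
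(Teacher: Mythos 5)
There is a genuine gap here. Your distinguishing invariants --- the twist $\theta_X$ on the simple projective object and the Gauss sum $\tau(\C)=\FPdim(\C)^{-1/2}\sum_Y\theta_Y\FPdim(Y)^2$ --- are not invariants of the \emph{braided} structure alone: a twist is extra (ribbon/spherical) data, and a braided tensor equivalence need not respect any particular choice of it, so the assertion that ``an equivalence of braided tensor categories must preserve the twist on the unique simple projective non-invertible object'' is unjustified as stated. Moreover the Gauss-sum formula you quote is the semisimple (modular) one; the categories in question are non-semisimple (they contain $\sRep(\mathbb{C})$, and $P({\bf 1})\neq{\bf 1}$), so that formula is not even the right object here, and the claim that the $8$ values ``should parse as $8$ distinct $16$th roots of unity'' is exactly the computation you never perform --- in particular you give no actual mechanism separating the two choices $\sigma_+$ versus $\sigma_-$, only the hope that they ``toggle a sign.'' Since the four values of $\beta$ alone can at best separate four of the eight sextuplets, the missing $\sigma$-dependence is not a technicality but the heart of the matter.

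The paper avoids all of this by using two scalars that are manifestly preserved by any braided tensor equivalence and are directly computable from the sextuplet $(R,\sigma,g=1,\lambda,\gamma,\beta)$: the braiding $X\otimes_{\C}X\xrightarrow{\cong}X\otimes_{\C}X$ acts on the one-dimensional space $\Hom_{\C}({\bf 1},X\otimes_{\C}X)$ by multiplication by $\beta$, and the braiding $\chi\otimes_{\C}X\xrightarrow{\cong}X\otimes_{\C}\chi$ (with $\chi$ the nontrivial invertible object) is $\chi(\sigma)\cdot\id_X$, where $\chi(\sigma_{\pm})=\mp i$. The pair $(\chi(\sigma),\beta)$ takes $8$ distinct values on the $8$ sextuplets, which gives pairwise non-equivalence without invoking any ribbon structure or central charge. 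If you want to salvage your approach, you would need to either prove that the Davydov--Runkel ribbon data is canonically attached to the underlying braided category (so that braided equivalences preserve it), or replace $\theta_X$ and $\tau$ by genuinely braided invariants such as the two scalars above --- at which point you have reproduced the paper's argument.
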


\begin{proof}
Let $\C=(R,\sigma,g=1,\lambda,\gamma,\beta)$ be one of the $8$ finite braided tensor categories constructed above. Let $X$ be the unique non-invertible simple object of $\C$, and let $\chi$ be the unique non-trivial character of $E(2)$ (viewed as the unique non-trivial invertible object of $\C$). We have, $\chi(\sigma_{\pm})=\mp i$. 

Then it is straightforward to verify that the action of the braiding isomorphism $X\otimes _{\C}X\xrightarrow{\cong} X\otimes _{\C}X$ on the $1$-dimensional space $\Hom_{\C}({\bf 1},X\otimes _{\C}X)$ is given by multiplication by $\beta$.

It is also straightforward to verify that the braiding isomorphism $\chi\otimes _{\C}X\xrightarrow{\cong} X\otimes _{\C}\chi$ is given by $\chi(\sigma)\cdot \id_{X}$.

It thus follows from the above that the braided tensor equivalence class of $\C$ is determined by $\sigma$ and $\beta$, which implies the claim.
\end{proof}

It thus follows from the above and Theorem \ref{main theorem} that all $8$ equivalence classes of finite non-degenerate braided non-integral tensor categories $\C$, containing a Lagrangian subcategory $\sRep(\mathbb{C})$, arise from the construction of Davydov-Runkel.


\begin{thebibliography}{BGHNPRW}

\bibitem[AEG]{AEG} 
N. Andruskiewitsch, P. Etingof and S. Gelaki.
Triangular Hopf algebras with the Chevalley property. 
\textit{Michigan Math. J.} {\bf 49} (2001), 277--298.

\bibitem[BN1]{BN} 
C. Bontea and D. Nikshych. On the Brauer-Picard group of a symmetric tensor category. {\em J. of Algebra} {\bf 440} (2015), 187--218.

\bibitem[BN2]{BN2} 
C. Bontea and D. Nikshych. Pointed braided tensor categories. Tensor categories and Hopf algebras, 67–94, {\em Contemp. Math.}, {\bf 728}, Amer. Math. Soc., Providence, RI, 2019.

\bibitem[BGHNPRW]{BGHNPRW}
P. Bruillard, C. Galindo, T. Hagge, S-H. Ng, J. Plavnik, E. Rowell and Z. Wang. 
Fermionic modular categories and the $16$-fold way. {\em J. Math. Phys.} {\bf 58} (2017), no. 4, 041704, 31 pp.

\bibitem[CC]{CC}
G. Carnovale and J. Cuadra. Cocycle twisting of $E(n)$-module algebras and applications to the Brauer group. {\em $K$-Theory} {\bf 33} (3) (2004), 251--276.

\bibitem[D]{De} 
P. Deligne. Cat$\acute{e}$gories tannakiennes. \textit{The Grothendieck Festschrift, Vol. II, Progr. Math.} {\bf 87} (1990), 111--195.

\bibitem[DR1]{DR1}
A. Davydov and I. Runkel. $\mathbb{Z}/2\mathbb{Z}$-extensions of Hopf algebra module categories by their base categories. {\em Advances in Mathematics} {\bf 247} (2013), 192--265.

\bibitem[DR2]{DR}
A. Davydov and I. Runkel. A braided monoidal category for symplectic fermions. {\em Symmetries and groups in contemporary physics}, 
399–404, {Nankai Ser. Pure Appl. Math. Theoret. Phys.} {\bf 11}.

\bibitem[DR3]{DR2}
A. Davydov and I. Runkel. Holomorphic symplectic fermions. {\em  Math. Z.} {\bf 285} (2017), no. 3-4, 967–1006.

\bibitem[DGNO1]{DGNO} 
V. Drinfeld, S. Gelaki, D. Nikshych and V. Ostrik. On braided fusion categories I. \textit{Selecta Math.} {\bf 16(1)} (2010), 1--119.

\bibitem[DGNO2]{DGNO2} 
V.~Drinfeld, S. Gelaki, D.~Nikshych and V.~Ostrik. Group-theoretical properties of nilpotent modular categories. \textit{arXiv:0704.0195}.

\bibitem[DNO]{DNO} 
A. Davydov, D. Nikshych and V. Ostrik. On the structure of the Witt group of braided fusion categories. {\em Selecta Math. (N.S.)} {\bf 19} (2013), no. 1, 237–-269.

\bibitem[DMNO]{DMNO} 
A. Davydov, M. Muger, D. Nikshych and V. Ostrik. The Witt group of non-degenerate braided fusion categories. 
{\em Crelles Journal} {\bf 677} (2013), 135--177.

\bibitem[EG1]{EG1} 
P. Etingof and S. Gelaki. The classification of finite dimensional triangular Hopf algebras over an algebraically closed field of characteristic $0$. {\em Moscow Mathematical Journal} {\bf 3} (2003), no.1, 37--43.

\bibitem[EG2]{EG} 
P. Etingof and S. Gelaki. Finite dimensional quasi-Hopf algebras with radical of codimension $2$. \textit{Mathematical Research Letters} {\bf 11} (2004), 685--696.

\bibitem[EH]{EH}
K. Erdmann and T. Holm. Maximal $n$-orthogonal modules for selfinjective algebras.  
{\em Proceedings of the American Mathematical Society} {\bf 136}, No. 9 (2008), 3069--3078.

\bibitem[EGNO]{EGNO} 
P. Etingof, S. Gelaki, D. Nikshych and V. Ostrik. Tensor categories. \textit{Mathematical Surveys and Monographs} {\bf 205} (2015).

\bibitem[EO]{EO} 
P. Etingof and V. Ostrik. Finite tensor
categories. \textit{Moscow Math. Journal} {\bf 4} (2004), 627--654.

\bibitem[G]{G}
S. Gelaki. On Pointed Ribbon Hopf Algebras. {\em 
Journal of Algebra} {\bf 181} (1996), no.3, 760--786.

\bibitem[GN]{GN}
S. Gelaki and D. Nikshych. Nilpotent fusion categories. {\em
Advances in Mathematics}, {\bf 217} (2008), 1053--1071.

\bibitem[GNN]{GNN}
S. Gelaki, D. Naidu and D. Nikshych. Centers of graded fusion categories. {\em Algebra and Number Theory} {\bf 3} (2009), no. 8, 959--990.

\bibitem[LKW]{LKW}
T. Lan, L. Kong and X-G. Wen. Modular extensions of unitary braided fusion categories and 2+1D topological/SPT orders with symmetries. 
{\em Comm. Math. Phys.} {\bf 351} (2017), no. 2, 709–739.

\bibitem[PO]{PO} 
F. Panaite and F. Oystaeyen. Quasitriangular structures for some pointed Hopf algebras of dimension $2n$. \textit{Communications in Algebra} (1999).

\bibitem[Sc]{Sc}
P. Schauenburg. The monoidal center construction and bimodules. {\em J. Pure Appl. Algebra} {\bf 158} (2001), no. 2--3, 325--346.

\bibitem[Sh]{Sh} 
K. Shimizu. Non-degeneracy conditions for braided finite tensor categories. {\em Adv. Math.} {\bf 355} (2019), 106778, 36 pp.

\end{thebibliography}
\end{document}